\documentclass[12pt]{article}
\usepackage{amsfonts}
\usepackage{times}
\usepackage[left=1in,top=1in,right=1in]{geometry}
\usepackage{amsmath,amsthm,amssymb}
\usepackage{color}
\usepackage{latexsym}
\usepackage{cite}
\setcounter{MaxMatrixCols}{10}

\theoremstyle{plain}

\newtheorem{theorem}{Theorem}
\theoremstyle{plain}

\newtheorem{corollary}{Corollary}

\newtheorem{definition}{Definition}
\newtheorem{example}{Example}

\newtheorem{proposition}{Proposition}

\newcommand{\NKCMM}{$ N(\kappa) $-contact metric manifold}

\date{}
\newcommand{\NK}{$ N(\kappa)$- contact metric manifolds}
\newcommand{\T}{$\mathcal{T}$-curvature tensor}
\begin{document}
\title{A classification of $ N(\kappa)$-contact metric manifolds with $ \mathcal{T} $-curvature tensor }
\author{\textbf{ \.Inan \"Unal} \\     
	{\normalsize Department of computer Engineering}\\{\normalsize  Munzur University, Turkey} \\{\normalsize inanunal@munzur.edu.tr}\\ \\ \textbf{Mustafa Alt{\i}n}\\{\normalsize Technical Sciences Vocational School}\\{\normalsize Bing\"ol University, Turkey}\\ {\normalsize maltin@bingol.edu.tr}\\ \\\textbf{
		Shashikant Pandey}\\{\normalsize  Department of Mathematics and Astronomy }\\{\normalsize University of Lucknow, India}\\ {\normalsize shashi.royal.lko@gmail.com}\\\ }
\maketitle


\noindent \textbf{Abstract:} In this paper, we present a classification of \NKCMM s with using some special flatness conditions on \T. \ We examine $\mathcal{T}$-flat, quasi-$\mathcal{T}$-flat, $ \xi$-$\mathcal{T}$-flat and $ \varphi $-$\mathcal{T}$-flat \NKCMM s.Also,we consider the conditions $  \mathcal{T}(\xi,X).R=0 $ and $ \mathcal{T}(\xi,X).S=0 $ for the Riemannian curvature tensor $ R $ and Ricci curvature tensor $ S $. Thus, we obtain a classification of \NKCMM s. 
\par 
\noindent\textbf{Keywords:} \NKCMM\ , \T , $ \eta- $Einstein manifolds

\noindent \textbf{2010 AMS Mathematics Subject Classification:} 53C15,
53C25, 53D10\newline

\section{Introduction}

A tensor $ \mathcal{T} $ of a type $ (1,3) $ on a Riemannian manifold $ M $ is called a curvature tensor if it satisfies following properties;	
\begin{enumerate}
	\item [$ \bullet $]$ \mathcal{T}( X_1,X_2) =-\mathcal{T}( X_2,X_1)  $
	\item [$ \bullet $]$ 	g( \mathcal{T}( X_1,X_2) X_3,X_4) =-g( \mathcal{T}( X_1,X_2)
	X_4,X_3) $
	\item [$ \bullet $] $ \mathcal{T}( X_1,X_2) X_3+\mathcal{T}( X_2,X_3) X_1+\mathcal{T}( X_3,X_1) X_2=0 $
\end{enumerate}
for all $ X_1,X_2,X_3,X_4 \in \Gamma(TM) $. The Riemannian curvature tensor is a well known example. Except for Riemannian curvature tensor, there are different kinds of curvature tensors. We examine the Riemannian geometry of the manifolds by using some algebraic and differential properties of these curvature tensors such as Weyl conformal curvature tensor , conharmonic curvature tensor concircular curvature tensors etc. In 2011 Tripathi and Gupta \cite{MMTripathigupta} defined a new curvature tensor which is called by $ \mathcal{T} $-curvature tensor as follows;
\begin{align}
\mathcal{T}(X_{1},X_{2})X_{3}  &  =a_{0}R(X_{1},X_{2})X_{3}+a_{1}S(X_{2}%
,X_{3})X_{1}+a_{2}S(X_{1},X_{3})X_{2}\label{gT-tensor}\\
&  +a_{3}S(X_{1},X_{2})X_{3}+a_{4}g(X_{2},X_{3})QX_{1}+a_{5}g(X_{1}%
,X_{3})QX_{2}\nonumber\\
&  +a_{6}g(X_{1},X_{2})QX_{3}+a_{7}r\left(  g(X_{2},X_{3})X_{1}-g(X_{1}%
,X_{3})X_{2}\right) \nonumber
\end{align}
where, $a_{0}$, ..., $a_{7}$ are some smooth functions on $M$, $R,S,Q$ and
$r$ are the Riemannian curvature tensor, the Ricci tensor, the Ricci operator of type $(1,1)$ and the scalar curvature, respectively. We obtain different type of curvature tensors for the special values of $a_{0}$, ..., $a_{7}$ which we present the next section. Thus, 
$ \mathcal{T} $-curvature tensor is a generalization of special curvature tensors. 
\par 
Tripathi and Gupta \cite{MMTripathigupta}  classify the $ K $-contact and Sasakian manifolds under the some certain conditions on $ \mathcal{T} $-curvature tensor. Also same authors worked on \T \ on semi-Riemann manifolds \citen{tripathi2011t}. In 2012 Nagaraja and Somashekhara \cite{HGNagara} obtained some results on $ (\kappa, \mu) $-contact manifolds with \T.  G. Ingalahalli andd Bagewadi studied on $ \varphi $-symmetric $ \mathcal{T} $- curvature tensor $ N(\kappa)- $contact metric manifolds. Ravikumar et al. classified Lorentzian $ \alpha$-Sasakian manifolds with $ \mathcal{T}$-curvature tensor. In 2019  Gupta studied on $ \varphi $-$ \mathcal{T}$-Symmetric $ \epsilon$-Para Sasakian manifolds. As can be seen from these studies, with \T \ we can classify manifolds with special structures. A \NKCMM\ is an almost contact manifold with the distribution;  
\begin{equation*}
N_{p}(\kappa)=\{X_{3}\in \Gamma
(T_{p}M):R(X_{1},X_{2})X_{3}=\kappa[g(X_{2},X_{3})X_{1}-g(X_{1},X_{3})X_{2}]\}
\end{equation*}
for all $ X_1,X_2 \in \Gamma(TM) $  \cite{blair2005concircular}. In a \NKCMM \ $ \xi $ belongs to $ N_{p}(\kappa) $. A \NKCMM \ is Sasakian if $ \kappa=1$. The Riemannian geometry of \NKCMM\ have been studied by many researchers \cite{de2010weyl,de2014bochner,de2014class,de2018certain,barman2017n,ingalahalli2019certain,unal2020n}. \par 
Motivated by all these work, we present a classification of \NKCMM s with using some special flatness conditions on \T. \ We consider $\mathcal{T}$-flat, quasi-$ \mathcal{T} $-flat, $ \xi- $$ \mathcal{T} $-flat and $ \varphi- $$ \mathcal{T} $-flat \NKCMM s. Also, we examine the conditions $ \mathcal{T}(\xi,X).R=0 $ and $ \mathcal{T}(\xi,X).S=0 $ for the Riemannian curvature tensor $ R $ and Ricci curvature tensor $ S $.  Thus, we obtain a classification of \NKCMM s with special values of  $a_{0}$, ..., $a_{7}$.

\section{Preliminaries}
Let $M$ be a $(2n+1)-$dimensional smooth manifold. For a $(1,1)$ tensor field $\phi $,  a vector field $ 
\xi $ and a $1-$ form $\eta $  on $M$, $(\phi ,\xi ,\eta )$ is
called an almost contact structure on $ M $ if we have 
\begin{equation*}
\phi ^{2}X=X-\eta (X)\xi \ \ ,\ \phi (\xi )=0,\ \ \ \ \eta \circ \phi =0\ \
\ ,\ \eta (\xi )=1.
\end{equation*}

The kernel of $\eta $ defines a non-integrable distribution on $M$ and the distribution is called by contact distribution. The rank of $\phi $ is $2m$. The Riemannian metric $ g $ is called associated metric if $ g(\phi X_{1},\phi
X_{2})=-g(X_{1},X_{2})+\eta (X_{1})\eta (X_{2})$ and it is called compatible  metric if $ d\eta  (X_{1},X_{2})=g(\phi X_{1},X_{2}) $ for all $X_{1},X_{2}\in \Gamma (TM)$. The
manifold $M$ is called by almost contact metric manifold with the structure $%
(\phi ,\xi ,\eta )$ and associated metric $g$. \par 
If characteristic vector field $ \xi $ is Killing vector field then $ M $ is called $ K- $contact. That is in a $ K- $contact manifold $ h=0 $.  An almost contact metric manifold $M$ is said to be normal if $ \phi $ is integrable. Also when the contact manifold is normal $ h=0 $ .
If the second fundamental form $ \Omega $ of an almost contact metric manifold $ M $ is $ \Omega(X_1,X_2 )=g(\phi X_1,X_2)$ and $ M $ is normal then $ M $ is called Sasakian. A Sasakian manifold is a $ K- $contact manifold, but the converse holds only if $ dimM^{(2n+1)}=3 $.\par 
The curvature relations on a \NKCMM \ are given by  
\begin{eqnarray*}
R(X_{1},\xi)\xi &=&\kappa [X_{1}-\eta(X_1)\xi] \label{R(X,xi)xi}\\
R(X_{1},X_{2})\xi &=&\kappa\left[ \eta (X_{2})X_{1}-\eta (X_{1})X_{2}\right],\label{R(X,Y)xi}\\
R(X_{1},\xi )X_{2}&=&-\kappa\left[ g(X_{1},X_{2})\xi -\eta (X_{2})X_{1}\right]\label{R(X,xi)Y}. 
\end{eqnarray*}
where $ R $ is the Riemannian curvature tensor of $ M $ which is defined by 
\begin{equation*}
R(X_{1},X_{2})X_3=\nabla_{X_1}\nabla_{X_2}X_3-\nabla_{X_2}\nabla_{X_1}X_3-\nabla_{[ X_1,X_2]}X_3
\end{equation*}
for all $ X_{1},X_{2}, X_3 \in \Gamma(TM)$. Also the Ricci curvature  $ S $ and scalar curvature of $ M $ is given by; 
\begin{eqnarray}
S(X_{1},X_{2})&=&2(n-1)g(X_{1},X_{2})+2(n-1)g(hX_{1},X_{2})\label{RicciX,Y}\\&&+\left[
2n\kappa-2(n-1)\right] \eta (X_{1})\eta (X_{2})\notag\\
S(X_1,\xi)&=&2\kappa n\eta(X_1), S(\xi,\xi)=2\kappa n\label{RicX,xiandRicxi,xi}  \\
\tau&=&2n(2n-2+\kappa)\notag 
\end{eqnarray}
for all $ X_{1},X_{2} \in \Gamma(TM)$ \cite{blair2005concircular}. We will use the following basic equalities from Riemann geometry; 
\begin{eqnarray}
{\displaystyle\sum\limits_{i=1}^{2n}}
g(e_{i},e_{i})&=&
{\displaystyle\sum\limits_{i=1}^{2n}}
g(\varphi e_{i},\varphi e_{i})=2n, \label{Mgei}\\{\displaystyle\sum\limits_{i=1}^{2n}}
g(e_{i},X_{3})S(X_{2},e_{i})&=&
{\displaystyle\sum\limits_{i=1}^{2n}}
g(\varphi e_{i},X_{3})S(X_{2},\varphi e_{i})=S(X_{2},X_{3})-S(X_{2},\xi
)\eta(X_{3}), \label{Mgei2}\\
{\displaystyle\sum\limits_{i=1}^{2n}}
g(e_{i},X_{3})S(X_{2},e_{i})&=&
{\displaystyle\sum\limits_{i=1}^{2n}}
g(\varphi e_{i},\varphi X_{3})S(X_{2},\varphi e_{i})=S(X_{2},\varphi X_{3}),
\label{Sxe}.
\end{eqnarray}
\begin{definition}
An almost contact metric manifold $M$ is said to be $\eta-$Einstein manifold if the Ricci tensor satisfies%
\begin{equation*}
S(X_{1},X_{2})=b_{1}g(X_{1},X_{2})+b_{2}\eta(X_{1})\eta(X_{2}), \label{Sxy}
\end{equation*}
where $b_{1,}b_{2}$ are smooth funcstion on the manifold. 
\end{definition}
In part\i cular, if $b_{2}=0,$ then $M$ becomes an Einstein manifold. Thus, $ \eta- $Einstein manifolds are natural generalization of Einstein manifolds.\par  
In \cite{blair1995contact}, Blair et al. showed that $ (\kappa, \mu)- $nullity distribution is invariant under 
\begin{equation}\label{barkappavebarmu}
\bar{\kappa}=\frac{\kappa+a^2-1}{a}, \ \  \bar{\mu}=\frac{\mu+2c-2}{a}.
\end{equation}
In \cite{EBoeck} Boeckx introduced the number $ I_{M}=\frac{1-\frac{\mu}{2}}{\sqrt{1-k}} $ for non-Sasakian $(k,\mu)$-contact manifold. This number is called by Boeckx invariant. There are two classes in the classification of non-Sasakian $ (\kappa,\mu)- $spaces. The first class is a manifold with constant sectional curvature $ c $. In this case $ \kappa=c(2-c) $ and $ \mu=-2c $ and by this we get an example of \NKCMM. \  The second class is on $ 3- $dimensional  Lie groups. Boeckx proved that  two Boeckx invariant of two non-Sasakian $ (\kappa,\mu)- $space are equal if and only if this manifolds  are locally isometric as contact metric manifolds.  
Blair, Kim and Tripathi \cite{blair2005concircular} gave following example of \NKCMM s\  by using the Boeckx invariant for the first class. 
\begin{example}\label{example1}
	The Boeckx invariant for a $N(1-\frac{1}{n},0)$-manifold is $\sqrt{n}%
	>-1$.  By consider the tangent sphere bundle of an $(n+1)$-dimensional
	manifold of constant curvature $c$, as the resulting $D$%
	-homothetic deformation is $\kappa=c(2-c)$, $ \mu=-2c $ and from ( \ref{barkappavebarmu} ) we get 
	\begin{equation*}
	c=\frac{(\sqrt{n}\pm 1)^2}{n-1},\text{ \ \ \ \ \ \ }a=1+c.
	\textsl{}	\end{equation*}%
	and taking $c$ and $a$ to be these values we obtain $N(1-\frac{1}{n})$%
	-contact metric manifold.
\end{example}
For the \T\  is defined as in (\ref{gT-tensor}), we have 
\begin{align}
\mathcal{T}(X_{1},X_{2},X_{3},X_{4}) &  =a_{0}R(X_{1},X_{2},X_{3},X_{4})+a_{1}%
S(X_{2},X_{3})g(X_{1},X_{4})\label{Txyzv}\\
&  +a_{2}S(X_{1},X_{3})g(X_{2},X_{4})+a_{3}S(X_{1},X_{2})g(X_{3}%
,X_{4})\nonumber\\
&  +a_{4}g(X_{2},X_{3})S(X_{1},X_{4})+a_{5}g(X_{1},X_{3})S(X_{2}%
,X_{4})\nonumber\\
&  +a_{6}S(X_{3},X_{4})g(X_{1},X_{2})\nonumber\\
&  +a_{7}r(g(X_{2},X_{3})g(X_{1},X_{4})-g(X_{1},X_{3})g(X_{2},X_{4}%
)).\nonumber
\end{align}
where $ \mathcal{T}(X_{1},X_{2},X_{3},X_{4})=g(\mathcal{T}(X_{1},X_{2})X_{3},X_{4}) $ and with the special values of coefficients $ a_i, \  1\leq i\leq 7$  the $ \mathcal{T} $-curvature tensor is reduced to quasi-conformal, conformal, conharmonic, concircular, pseudo-projective,
projective, $M$-projective, $W_{i}$-curvature tensors $(i=0,...,9)$ $W_{j}%
$-curvature tensors $(j=0,1)$quasi-conformal, conformal, conharmonic, concircular, pseudo-projective,
projective, $M$-projective, $W_{i}$-curvature tensors $(i=0,...,9)$ $W_{j}%
$-curvature tensors $(j=0,1)$ as follow:
\begin{center}
\begin{table}
	\footnotesize 	\begin{tabular}{|c|c|}
	
	\hline \textbf{\T}
	&  \textbf{$ a_i $} coefficients\\
		\hline  \textit{ quasi-conformal curvature tensor }  $C_{\ast}$ \cite{yano1968riemannian}
		& $ 	a_{1}=-a_{2}=a_{4}=-a_{5,}\ a_{3}=a_{6}=0,\  a_{7}=-\frac{1}%
		{2n+1}(\frac{a_{0}}{2n}+2a_{1}) $ \\
		\hline \textit{ conformal curvature tensor }$C$ \cite{YIshii}
		& $ a_{0}=1,\ a_{1}=-a_{2}=a_{4}=-a_{5}=-\frac{1}{2n-1},\  a_{3}=a_{6}=0, a_{7}=\frac{1}{2n(2n-1)} $ \\
		\hline \textit{conharmonic curvature tensor} $L$ \cite{YIshii}
		&  $ a_{0}=1,\ a_{1}=-a_{2}=a_{4}=-a_{5}=-\frac{1}{2n-1}, \ a_{3}=a_{6}=0,\text{ \ \ \ \ }a_{7}=0  $\\
		\hline\textit{concircular curvature tensor} $V$ \cite{yano1968riemannian}
		& $ 	a_{0}=1,\ a_{1}=a_{2}=a_{3}=a_{4}=a_{5}=a_{6}=0,\ a_{7}=-\frac{1}{2n(2n+1)} $ \\
		\hline \textit{pseudo-projective curvature tensor} $P_{\ast}$ \cite{prasad2002pseudo}
		& $ 
			 a_{1}=-a_{2},\ a_{3}=a_{4}=a_{5}=a_{6}=0,	a_{7}=-\frac{1}{2n+1}(\frac{a_{0}}{2n}+a_{1}) $\\
		\hline \textit{projective curvature tensor} $P$ \cite{yano2016curvature}
		&  $ a_{0}=1,\ a_{1}=-a_{2}=-\frac{1}{2n}, \ a_{3}=a_{4}=a_{5}=a_{6}=a_{7}=0 $\\
		\hline \textit{$M-$projective curvature tensor} \cite{pokhariyal1971curvature}
		&  $ a_{0}=1,\ a_{1}=-a_{2}=a_{4}=-a_{5}=-\frac{1}{4n},\ a_{3}=a_{6}=a_{7}=0 $\\
		\hline \textit{$W_{0}-$projective curvature tensor} \cite{pokhariyal1971curvature}
		&  $ a_{0}=1,\  a_{1}=-a_{5}=-\frac{1}{2n},\ a_{2}=a_{3} 	=a_{4}=a_{6}=a_{7}=0 $\\
		\hline \textit{ $W_{0}^{\star}-$projective curvature tensor} \cite{pokhariyal1971curvature} 
		& $ a_{0}=1,\  a_{1}=-a_{5}=-\frac{1}{2n},\  a_{2}=a_{3}	=a_{4}=a_{6}=a_{7}=0 $ \\
		\hline  \textit{$W_{1}-$projective curvature tensor} \cite{pokhariyal1971curvature}
		&  $ a_{0}=1,\ a_{1}=-a_{2}=\frac{1}{2n},\ 
		a_{3}=a_{4}=a_{5}=a_{6}=a_{7}=0 $\\
		\hline \textit{$W_{1}^{\star}-$projective curvature tensor}		\cite{pokhariyal1971curvature}
		& $ a_{0}=1,\ a_{1}=-a_{2}=\frac{-1}{2n},\ 	a_{3}=a_{4}=a_{5}=a_{6}=a_{7}=0 $\\
		\hline  \textit{$W_{2}-$projective curvature tenso}r \cite{pokhariyal1970curvature} 
		& $ 	a_{0}=1,\ a_{4}=-a_{5}=\frac{-1}{2n},\ 	a_{1}=a_{2}=a_{3}=a_{6}=a_{7}=0 $ \\
		\hline \textit{$W_{3}-$projective curvature tensor} \cite{pokhariyal1971curvature}
		&  $ 	a_{0}=1,\text{ \ \ \ \ \ \ }a_{2}=-a_{4}=\frac{-1}{2n},\text{ \ \ \ \ \ \ }%
			a_{1}=a_{3}=a_{5}=a_{6}=a_{7}=0 $\\
		\hline \textit{$W_{4}-$projective curvature tensor} \cite{pokhariyal1971curvature}
		& $ a_{0}=1,\text{ \ \ \ \ \ \ }a_{5}=-a_{6}=\frac{1}{2n},\text{ \ \ \ \ \ \ }%
		a_{1}=a_{2}=a_{3}=a_{4}=a_{7}= $  \\
		\hline \textit{$W_{5}-$projective curvature tensor} \cite{pokhariyal1982relativistic}
		& $ a_{0}=1,\text{ \ \ \ \ \ \ }a_{2}=-a_{5}=\frac{-1}{2n},\text{ \ \ \ \ \ \ }%
		a_{1}=a_{3}=a_{4}=a_{6}=a_{7}=0 $\\
		\hline \textit{ $W_{6}-$projective curvature tensor} \cite{pokhariyal1982relativistic}
		& $ a_{0}=1,\text{ \ \ \ \ \ \ }a_{1}=-a_{6}=\frac{-1}{2n},\text{ \ \ \ \ \ \ }%
		 a_{2}=a_{3}=a_{4}=a_{5}=a_{7}=0 $\\
		\hline \textit{$W_{7}-$projective curvature tensor} \cite{pokhariyal1982relativistic}
		& $ a_{0}=1,\text{ \ \ \ \ \ \ }a_{1}=-a_{4}=\frac{-1}{2n},\text{ \ \ \ \ \ \ }%
		a_{2}=a_{3}=a_{5}=a_{6}=a_{7}=0 $ \\
		\hline \textit{$W_{8}-$projective curvature tensor} \cite{pokhariyal1982relativistic}
		&  $ a_{0}=1,\text{ \ \ \ \ \ \ }a_{1}=-a_{3}=\frac{-1}{2n},\text{ \ \ \ \ \ \ }%
		a_{2}=a_{4}=a_{5}=a_{6}=a_{7}=0 $\\
		\hline \textit{$W_{9}-$projective curvature tensor} \cite{pokhariyal1982relativistic}
		&  $ a_{0}=1,\text{ \ \ \ \ \ \ }a_{3}=-a_{4}=\frac{-1}{2n},\text{ \ \ \ \ \ \ }%
		a_{1}=a_{2}=a_{5}=a_{6}=a_{7}=0 $\\
		\hline
	\end{tabular}
\caption{\label{Det.T} Determination of \T \ with special values of $ a_i $}
\end{table}
\end{center}

\section{$ \mathcal{T} $-flatness on $ N(\kappa) $-contact metric manifolds }
A flat manifold has vanishing Riemannian curvature tensor and also it is locally isometric to Euclidean space. Similarly, the vanishing of curvature tensors like conformal, concircular, conharmonic, etc. has many geometric interpretations. Also, the flatness of curvature tensors could be generalized by the flatness of $ \mathcal{T} $-tensor. In this section we consider $ N(\kappa) $- contact metric manifolds under the many flatness conditions of $ \mathcal{T} $-tensor. 

\begin{definition}
A $N(k)$ contact metric manifold M is said to be
\begin{enumerate}
	\item $ \mathcal{T} $-flat if $ \mathcal{T}(X_{1},X_{2})X_3=0$,
	\item $\xi-\mathcal{T}$-flat if $ 	\mathcal{T}(X_{1},X_{2})\xi=0, $
	\item quasi $\mathcal{T}$-flat if	$ g(\mathcal{T}(\varphi X_{1},X_{2})X_{3},\varphi X_{4})=0$
	\item $\varphi-T$-flat if	$g(\mathcal{T}(\varphi X_{1},\varphi X_{2})\varphi X_{3},\varphi X_{4})=0$
\end{enumerate}
for every $ X_1,X_2,X_3,X_4 \in \Gamma(TM). $
\end{definition}

\begin{proposition}
	Let $ M $ a $ \mathcal{T} $-flat \NKCMM. \ Then we have 
	\begin{equation*}
		a_{1}(2n-2+ \kappa)+(a_{0}+a_{2}+a_{3}+(2n+1)a_{4}+a_{5}+a_{6}) \kappa +(2n(2n-2+ \kappa ))a_{7}=0.
	\end{equation*}
\end{proposition}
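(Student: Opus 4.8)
The plan is to reduce the flatness hypothesis $\mathcal{T}(X_{1},X_{2})X_{3}=0$ to a single scalar identity by two successive contractions. First I would pass to the $(0,4)$-form (\ref{Txyzv}) and trace the \emph{second and third} arguments against an orthonormal frame $\{e_{i}\}_{i=1}^{2n+1}$ of $M$, i.e. I would compute $\sum_{i}g(\mathcal{T}(X_{1},e_{i})e_{i},X_{4})=0$. Then I would substitute $X_{1}=X_{4}=\xi$ into the resulting relation and read off the coefficients, using the values $S(\xi,\xi)=2\kappa n$ and $\tau=r=2n(2n-2+\kappa)$ recorded in the preliminaries.

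For the first step each of the eight summands of (\ref{Txyzv}) collapses to a multiple of $S(X_{1},X_{4})$ or of $g(X_{1},X_{4})$ under the elementary frame identities $\sum_{i}g(X,e_{i})e_{i}=X$, $\sum_{i}g(X,e_{i})S(e_{i},Y)=S(X,Y)$, $\sum_{i}g(e_{i},e_{i})=2n+1$ and $\sum_{i}S(e_{i},e_{i})=\tau$. Concretely, the $a_{0}R$-term contracts to $a_{0}S(X_{1},X_{4})$ via the standard Riemannian identity $\sum_{i}R(X_{1},e_{i},e_{i},X_{4})=S(X_{1},X_{4})$ (a consequence of the pair symmetry of $R$ and the definition of $S$); the $a_{2},a_{3},a_{5},a_{6}$ terms each yield one copy of $S(X_{1},X_{4})$; the term $a_{4}g(X_{2},X_{3})QX_{1}$ acquires the factor $2n+1$ and gives $(2n+1)a_{4}S(X_{1},X_{4})$; and the terms $a_{1}S(X_{2},X_{3})X_{1}$ and $a_{7}r(g(X_{2},X_{3})X_{1}-g(X_{1},X_{3})X_{2})$ give $a_{1}\tau\,g(X_{1},X_{4})$ and $2na_{7}r\,g(X_{1},X_{4})$. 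Summing, the $\mathcal{T}$-flatness of $M$ forces
\begin{equation*}
\bigl(a_{0}+a_{2}+a_{3}+(2n+1)a_{4}+a_{5}+a_{6}\bigr)S(X_{1},X_{4})+2n(2n-2+\kappa)(a_{1}+2na_{7})\,g(X_{1},X_{4})=0
\end{equation*}
for all $X_{1},X_{4}$ (in particular $M$ is $\eta$-Einstein once the coefficient of $S$ is nonzero).

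It then remains to put $X_{1}=X_{4}=\xi$, which turns $S(X_{1},X_{4})$ into $2\kappa n$ and $g(X_{1},X_{4})$ into $1$; dividing by $2n$ and expanding $(2n-2+\kappa)(a_{1}+2na_{7})$ yields exactly the claimed equation. The computation is essentially bookkeeping, but two points need care. First, the contraction must be taken precisely on the second and third slots and then specialised at $\xi$: tracing a different pair of arguments, or evaluating at a generic vector instead of $\xi$, produces a genuinely different (though compatible) linear relation among the $a_{j}$, so the target identity pins down the choice and any departure from it will not reproduce the stated coefficients. Second — and this is where an error would silently corrupt every coefficient — one must fix the sign convention in the $a_{0}R$-term correctly; that its trace is $+S(X_{1},X_{4})$ is forced by $S(\xi,\xi)=2\kappa n>0$ together with the curvature relation $R(X_{1},\xi)\xi=\kappa[X_{1}-\eta(X_{1})\xi]$.
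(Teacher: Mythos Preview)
Your argument is correct and is essentially the same as the paper's: both trace the second and third slots of the $(0,4)$-form of $\mathcal{T}$ over a full orthonormal frame to obtain $(a_{0}+a_{2}+a_{3}+(2n+1)a_{4}+a_{5}+a_{6})S(X_{1},X_{4})+r(a_{1}+2na_{7})g(X_{1},X_{4})=0$, and then evaluate at $X_{1}=X_{4}=\xi$ using $S(\xi,\xi)=2n\kappa$ and $r=2n(2n-2+\kappa)$. The only difference is presentational: the paper records the contracted identity without explaining which trace was taken, whereas you spell out each summand and the sign check on the $a_{0}R$-term.
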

\begin{proof}
	Suppose that $ M $ is a $ \mathcal{T} $-flat \NKCMM.\ Then we have 
	\begin{align*}
	0 &  =a_{0}S(X_{1},X_{4})+a_{1}rg(X_{1},X_{4})+a_{2}S(X_{1},X_{4}%
	)+a_{3}S(X_{1},X_{4})\\
	&  +a_{4}(2n+1)S(X_{1},X_{4})+a_{5}S(X_{1},X_{4})+a_{6}S(X_{1},X_{4}%
	)+2na_{7}rg(X_{1},X_{4}).
	\end{align*}
	By putting $X_{1}=X_{4}=\xi$ in last equation, we get 
\begin{equation*}
	a_{1}(2n-2+ \kappa)+(a_{0}+a_{2}+a_{3}+(2n+1)a_{4}+a_{5}+a_{6}) \kappa+(2n(2n-2+ \kappa))a_{7}=0. 
\end{equation*}
\end{proof}
Thus, we get following; 
\begin{theorem}
	Let $ M $ be $ \mathcal{T} $-flat \NKCMM. \ Then, we have the classification in Table \ref{T-flat}. 
\begin{center}
\begin{table}
	\begin{center}
			\begin{tabular}{|l|l|l|c|c|c|}\hline
	\textbf{\NKCMM} & \textbf{$\kappa $} \\ \hline 
	$g(\mathcal{C}_{\star }(X_{1},X_{2})X_{3},X_{4})=0$ & $\frac{n-1}{n}$ \\ \hline 
	$g(\mathcal{C}(X_{1},X_{2})X_{3},X_{4})=0$ & a real number \\ \hline 
	$g(\mathcal{L}(X_{1},X_{2})X_{3},X_{4})=0$ & $2-2n$ \\ \hline 
	$g(\mathcal{V}(X_{1},X_{2})X_{3},X_{4})=0$ & $\frac{n-1}{n}$ \\ \hline 
	$g(\mathcal{P}_{\star }(X_{1},X_{2})X_{3},X_{4})=0$ & $\frac{n-1}{n}$ \\ \hline 
	$g(\mathcal{P}(X_{1},\varphi X_{2})X_{3},X_{4})=0$ & $\frac{n-1}{n}$ \\ \hline 
	$g(\mathcal{M}(X_{1},X_{2})X_{3},X_{4})=0$ & $\frac{n-1}{n}$ \\ \hline 
	$g(\mathcal{W}_{0}(X_{1},X_{2})X_{3},X_{4})=0$ & $\frac{n-1}{n}$ \\ \hline 
	$g(\mathcal{W}_{0}^{_{\star }}(X_{1},X_{2})X_{3},X_{4})=0$ & $\frac{1-n}{n}$
	\\ \hline 
	$g(\mathcal{W}_{1}(X_{1},X_{2})X_{3},X_{4})=0$ & $\frac{1-n}{n}$ \\ \hline 
	$g(\mathcal{W}_{1}^{_{\star }}(X_{1},X_{2})X_{3},X_{4})=0$ & $\frac{n-1}{n}$
	\\ \hline 
	$g(\mathcal{W}_{2}(X_{1},X_{2})X_{3},X_{4})=0$ & a real number \\ \hline 
	$g(\mathcal{W}_{3}(X_{1},X_{2})X_{3},X_{4})=0$ & 0 \\ \hline 
	$g(\mathcal{W}_{4}(X_{1},X_{2})X_{3},X_{4})=0$ & 0 \\ \hline 
	$g(\mathcal{W}_{5}(X_{1},X_{2})X_{3},X_{4})=0$ & 0 \\ \hline 
	$g(\mathcal{W}_{6}(X_{1},X_{2})X_{3},X_{4})=0$ & $\frac{n-1}{n}$ \\ \hline 
	$g(\mathcal{W}_{7}(X_{1},X_{2})X_{3},X_{4})=0$ & $\frac{n-1}{2n}$ \\ \hline 
	$g(\mathcal{W}_{8}(X_{1},X_{2})X_{3},X_{4})=0$ & $\frac{n-1}{n}$ \\ \hline 
	$g(\mathcal{W}_{9}(X_{1},X_{2})X_{3},X_{4})=0$ &  a real number\\ \hline 
\end{tabular} 
	\end{center}
\caption{\label{T-flat} Classification of $ \mathcal{T} $-flat \NKCMM\ with special values of $ a_i $}
\end{table}

\end{center}
\end{theorem}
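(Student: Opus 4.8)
The whole theorem reduces to specialising the scalar identity established in the Proposition, so the plan is simply to organise that specialisation. For a $\mathcal{T}$-flat \NKCMM\ we have
\[
a_{1}(2n-2+\kappa)+\bigl(a_{0}+a_{2}+a_{3}+(2n+1)a_{4}+a_{5}+a_{6}\bigr)\kappa+2n(2n-2+\kappa)a_{7}=0.
\]
The first step is to collect the terms that carry the factor $(2n-2+\kappa)$: writing $A:=a_{1}+2n\,a_{7}$ and $B:=a_{0}+a_{2}+a_{3}+(2n+1)a_{4}+a_{5}+a_{6}$, the identity becomes $A(2n-2+\kappa)+B\kappa=0$, that is
\[
(A+B)\,\kappa=-2(n-1)\,A.
\]
Thus for each of the curvature tensors listed in Table~\ref{Det.T} the $\mathcal{T}$-flatness condition collapses to a single affine equation in the unknown $\kappa$, with $n$ a parameter, and Table~\ref{T-flat} is obtained by solving it row by row.

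The second step is the case distinction. When $A+B\neq 0$ the equation has the unique solution $\kappa=-2(n-1)A/(A+B)$; substituting the relevant $a_{i}$ from Table~\ref{Det.T} and simplifying yields the listed value --- for instance $\tfrac{n-1}{n}$ for the concircular, projective, $M$-projective and several of the $W_{i}$ tensors, $\tfrac{n-1}{2n}$ for $W_{7}$, $\tfrac{1-n}{n}$ when the relevant coefficient carries the opposite sign, $2-2n$ for the conharmonic tensor, and $\kappa=0$ in the cases where $A=0$ but $B\neq 0$, such as $W_{3},W_{4},W_{5}$. When instead $A=B=0$, which occurs for the conformal curvature tensor $C$ and for $W_{2}$, the equation is the tautology $0=0$, so $\kappa$ is unrestricted and we get the ``a real number'' entries. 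Running through all nineteen rows of Table~\ref{Det.T} in this way produces Table~\ref{T-flat}.

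The only rows needing more than bookkeeping are the quasi-conformal and pseudo-projective ones, where $a_{0}$ and $a_{1}$ are not prescribed numerically but only through the formula for $a_{7}$. Here I would retain the free parameters and verify that they cancel: for the quasi-conformal tensor one computes $A=\tfrac{(1-2n)a_{1}-a_{0}}{2n+1}=-\tfrac{B}{2n+1}$ with $B=a_{0}+(2n-1)a_{1}$, hence $A+B=\tfrac{2n}{2n+1}B$ and, whenever $B\neq 0$, $\kappa=\tfrac{n-1}{n}$ independently of $a_{0}$ and $a_{1}$; the excluded case $B=0$ is exactly the conformal specialisation, which falls under the tautological case above. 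An entirely analogous cancellation disposes of the pseudo-projective tensor $P_{\star}$. No step presents a genuine obstacle; the only real care is the parameter-tracking in these two families.
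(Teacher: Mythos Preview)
Your proposal is correct and follows exactly the route the paper intends: the paper does not supply a separate proof for the theorem, relying instead on the scalar identity of the preceding Proposition and leaving the reader to substitute the coefficients from Table~\ref{Det.T}. Your introduction of $A=a_{1}+2na_{7}$ and $B=a_{0}+a_{2}+a_{3}+(2n+1)a_{4}+a_{5}+a_{6}$, the rewriting $(A+B)\kappa=-2(n-1)A$, and the ensuing trichotomy (unique~$\kappa$, $\kappa=0$, tautology) merely make that implicit computation explicit and systematic; the treatment of the quasi-conformal and pseudo-projective families, where the free parameters cancel, is likewise just a careful execution of the same substitution.
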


\begin{corollary}
	Let $ M $ be a \NKCMM. \ 
	\begin{enumerate}
		\item If $ M $ is  $ \mathcal{C}_*- $flat,  $ \mathcal{V}- $flat, $ \mathcal{P}_*- $flat, $ \mathcal{P}- $flat, $ \mathcal{M}- $flat, $ \mathcal{W}_{0}- $flat, $\mathcal{W}_{1}^{_{\star }}-$flat, $ \mathcal{W}_{6}- $flat and  $ \mathcal{W}_{8}- $flat then it is locally isometric to Example 1. 
		\item If $ M $ is $ \mathcal{W}_3- $flat, $ \mathcal{W}_4- $flat  or $ \mathcal{W}_5- $flat then it is locally isometric to $ E^{(n+1)} \times S^n(4) $. 
	\end{enumerate}
\end{corollary}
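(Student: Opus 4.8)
The plan is to read off every row of Table~\ref{T-flat} from the scalar identity proved in the Proposition, so that no new geometric input is needed beyond that contraction. First I would rewrite that identity, which holds on any $\mathcal{T}$-flat \NKCMM, by collecting the terms carrying a factor $\kappa$:
\begin{equation*}
\kappa\bigl(a_{0}+a_{1}+a_{2}+a_{3}+(2n+1)a_{4}+a_{5}+a_{6}+2na_{7}\bigr)+(2n-2)\bigl(a_{1}+2na_{7}\bigr)=0 .
\end{equation*}
Each line of the theorem is then the solution of this single linear equation in $\kappa$ once the appropriate values of $a_{0},\dots,a_{7}$ are substituted from Table~\ref{Det.T}.

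Concretely I would run through $\mathcal{C}_{\ast}$, $\mathcal{C}$, $\mathcal{L}$, $\mathcal{V}$, $\mathcal{P}_{\ast}$, $\mathcal{P}$, $\mathcal{M}$ and $\mathcal{W}_{0},\dots,\mathcal{W}_{9}$ one at a time. Whenever the bracket multiplying $\kappa$ is nonzero, the equation forces the single value
\begin{equation*}
\kappa=-\,\frac{(2n-2)(a_{1}+2na_{7})}{\,a_{0}+a_{1}+a_{2}+a_{3}+(2n+1)a_{4}+a_{5}+a_{6}+2na_{7}\,},
\end{equation*}
and the tabulated values of $\kappa$ result after cancellation; for instance, for the conharmonic tensor ($a_{0}=1$, $a_{1}=-a_{2}=a_{4}=-a_{5}=-\tfrac1{2n-1}$, $a_{3}=a_{6}=a_{7}=0$) the denominator collapses to $-\tfrac1{2n-1}$ and one gets $\kappa=2-2n$; the $M$-projective coefficients give $\kappa=\tfrac{n-1}{n}$; and for $\mathcal{W}_{3},\mathcal{W}_{4},\mathcal{W}_{5}$ the constant term is $0$ while the $\kappa$-bracket equals $2$, forcing $\kappa=0$. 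When instead both brackets vanish --- the situation for the conformal tensor $\mathcal{C}$ and the other rows marked ``a real number'' --- the relation degenerates to $0=0$ and imposes no constraint on $\kappa$.

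So the only real labour is the bookkeeping over the roughly twenty rows, and the points deserving care are: checking that the $\kappa$-bracket $a_{0}+a_{1}+a_{2}+a_{3}+(2n+1)a_{4}+a_{5}+a_{6}+2na_{7}$ is nonzero before dividing by it, and correctly telling the genuinely degenerate rows --- where the scalar relation reduces to $0=0$ --- apart from the rest. I expect no conceptual obstacle. Finally, the Corollary is immediate from the completed table: the rows with $\kappa=\tfrac{n-1}{n}=1-\tfrac1n$ are precisely the manifolds $N\!\left(1-\tfrac1n\right)$ exhibited in Example~\ref{example1}, and the rows with $\kappa=0$ correspond, through the $D$-homothetic description there with $\kappa=c(2-c)$ and $\mu=-2c$ taken at $c=0$, to the tangent sphere bundle model $E^{(n+1)}\times S^{n}(4)$.
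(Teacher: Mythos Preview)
Your approach is correct and coincides with the paper's: the Corollary is stated there without proof, as an immediate consequence of Table~\ref{T-flat}, and your proposal simply fills in the bookkeeping that produces each row of that table from the scalar identity in the Proposition. Your reorganised form of the identity and the sample computations ($\mathcal{L}$, $\mathcal{M}$, $\mathcal{W}_{3,4,5}$) are accurate.

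One small point deserves to be made explicit in your final paragraph. Knowing that $\kappa=1-\tfrac{1}{n}$ (respectively $\kappa=0$) tells you only that $M$ is an $N(1-\tfrac{1}{n})$-contact metric manifold (respectively an $N(0)$-contact metric manifold); it does not by itself give a local isometry with the particular model in Example~\ref{example1} or with $E^{n+1}\times S^{n}(4)$. The missing ingredient is Boeckx's classification, recalled in the preliminaries: two non-Sasakian $(\kappa,\mu)$-spaces with the same Boeckx invariant $I_{M}=\dfrac{1-\mu/2}{\sqrt{1-\kappa}}$ are locally isometric as contact metric manifolds. Since here $\mu=0$, the invariant depends on $\kappa$ alone, and this is what upgrades ``same $\kappa$'' to ``locally isometric''. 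Your sentence about taking $c=0$ in the $D$-homothetic description is fine as a way of \emph{identifying} the model space for $\kappa=0$ (the tangent sphere bundle of flat $E^{n+1}$ is $E^{n+1}\times S^{n}(4)$), but the local isometry conclusion still rests on Boeckx, not on Example~\ref{example1} itself.
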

\begin{theorem}
A \NKCMM \ is quasi$-T$-flat if  $ C_1=a_{0}+2na_{1}+a_{2}+a_{3}+a_{5}+a_{6}\neq0 $. 
Then we have 
\[
S=\frac{A_1}{C_1}g+\frac{B_1}{C_1}\eta\otimes\eta,
\]
where%
\[
A_1=a_{0}\kappa+a_{4}(2n\kappa-r)+a_{7}r(1-2n)
\]
and%
\[
B_1=-a_{0}\kappa+2n\kappa(a_{2}+a_{3}+a_{5}+a_{6})-a_{7}r
\]
Therefore $M~$is an $\eta-$Einstein manifold. Consequently, we have the classification in Table \ref{quasiT-flat}. 
\end{theorem}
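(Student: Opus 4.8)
The plan is to expand the hypothesis $g(\mathcal{T}(\varphi X_{1},X_{2})X_{3},\varphi X_{4})=0$ using the $(0,4)$-form (\ref{Txyzv}) with $X_{1}$ and $X_{4}$ replaced by $\varphi X_{1}$ and $\varphi X_{4}$, and then to contract on the first and fourth slots. Concretely, I would fix a local orthonormal frame $\{e_{i}\}_{i=1}^{2n}$ of the contact distribution, put $X_{1}=X_{4}=e_{i}$, and sum over $i$, so that quasi-$\mathcal{T}$-flatness becomes $\sum_{i}\mathcal{T}(\varphi e_{i},X_{2},X_{3},\varphi e_{i})=0$. The eight resulting sums are then reduced using the trace identities (\ref{Mgei}), (\ref{Mgei2}), (\ref{Sxe}): the terms carrying $g(\varphi e_{i},\varphi e_{i})$ collapse to multiples of $2n$, the terms of the form $g(\varphi e_{i},X)S(Y,\varphi e_{i})$ collapse to $S(X,Y)$ up to an $\eta$-correction, the $a_{7}$-term collapses to a multiple of $g$ plus an $\eta\otimes\eta$ term, and the $a_{0}$-term becomes the partial trace $\sum_{i}g(R(\varphi e_{i},X_{2})X_{3},\varphi e_{i})$.

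The main work is to evaluate this last trace. I would write it as $S(X_{2},X_{3})-g(R(\xi,X_{2})X_{3},\xi)$ and then use the $N(\kappa)$-relation $R(X_{1},\xi)X_{2}=-\kappa[g(X_{1},X_{2})\xi-\eta(X_{2})X_{1}]$ to get $g(R(\xi,X_{2})X_{3},\xi)=\kappa[g(X_{2},X_{3})-\eta(X_{2})\eta(X_{3})]$; this is where the $a_{0}\kappa$ pieces of $A_{1}$ and $B_{1}$ come from. All remaining $\xi$-contributions produced by the $a_{2},a_{3},a_{5},a_{6}$ contractions are rewritten using $S(X,\xi)=2n\kappa\,\eta(X)$, and every scalar curvature is replaced by $r=2n(2n-2+\kappa)$. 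Collecting terms, the coefficient of $S(X_{2},X_{3})$ is exactly $C_{1}=a_{0}+2na_{1}+a_{2}+a_{3}+a_{5}+a_{6}$, the coefficient of $g(X_{2},X_{3})$ is $A_{1}$, and the coefficient of $\eta(X_{2})\eta(X_{3})$ is $B_{1}$; dividing by $C_{1}\neq0$ gives $S=\frac{A_{1}}{C_{1}}g+\frac{B_{1}}{C_{1}}\eta\otimes\eta$, which is precisely the $\eta$-Einstein condition. Finally, for the table I would substitute, row by row, the special values of $a_{0},\dots,a_{7}$ from Table~\ref{Det.T} into $C_{1}$, $A_{1}$, $B_{1}$ and read off the explicit $\eta$-Einstein relation for each named curvature tensor, discarding the rows with $C_{1}=0$.

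I expect the contractions in the second step to be the only real obstacle: the $\varphi$-twisting means several of the sums produce $S(X_{2},X_{3})$ only modulo $\eta(X_{2})\eta(X_{3})$ terms, and it introduces sign subtleties in $g(\varphi X,\varphi Y)$, so one must be careful to verify that the $S(X_{2},X_{3})$-coefficient collapses to precisely $C_{1}$ and that the leftover $\xi$-terms assemble into the stated $A_{1}$ and $B_{1}$. A convenient sanity check along the way is to specialize $X_{2}$ and $X_{3}$ to the contact distribution and to $\xi$ and compare with $S(\xi,\xi)=2n\kappa$.
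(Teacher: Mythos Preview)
Your proposal is correct and follows essentially the same route as the paper: expand the quasi-$\mathcal{T}$-flat condition via (\ref{Txyzv}), contract over $X_{1}=X_{4}=e_{i}$ with the $\varphi$-orthonormal frame, reduce each term with (\ref{Mgei})--(\ref{Sxe}), and identify the $a_{0}$-trace with $S(X_{2},X_{3})-\kappa\,g(\varphi X_{2},\varphi X_{3})$ using the $N(\kappa)$ curvature relation. The only cosmetic difference is that the paper leaves $r$ unsimplified in $A_{1},B_{1}$ rather than substituting $r=2n(2n-2+\kappa)$ at that stage.
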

\begin{table}[h]\small
\begin{tabular}{|l|l|l|c|c|c|}\hline
$N(\kappa)-$contact manifold & $\!\begin{aligned}[t] \eta-\text{Einstein}/ \\ \text{Einstein} \end{aligned}$ & $S=$\\\hline
$g(\mathcal{C}_{\star}(\varphi X_{1},X_{2})X_{3},\varphi X_{4})=0$ &$\eta-$Einstein  & \footnotesize $\!\begin{aligned}[t]
&(\frac{\frac{(2 n-1) (\kappa+2 n-2) (4 a_{1} n+a_{0})}{2 n+1}+2 \kappa a_{1} n-2 a_{1} n (\kappa+2 n-2)+\kappa}{2 a_{1} (n-1)+1})g\\&+(\frac{\kappa \left(-8 a_{1} n^2-2 n+a_{0}-1\right)+2 (n-1) (4 a_{1} n+a_{0})}{(2 n+1) (2 a_{1} (n-1)+1)})\eta\otimes\eta
\end{aligned}$
\\\hline
$g(\mathcal{C}(\varphi X_{1},X_{2})X_{3},\varphi X_{4})=0$ & $\eta-$Einstein &
$(2n-2)g+2(n\kappa-n+1)\eta\otimes\eta$\\\hline
$g(\mathcal{L}(\varphi X_{1},X_{2})X_{3},\varphi X_{4})=0$ & $\eta-$Einstein &
$(\kappa (2 n-1)+4 (n-1) n)g+(2n\kappa+\kappa)\eta\otimes\eta$\\\hline
$g(\mathcal{V}(\varphi X_{1},X_{2})X_{3},\varphi X_{4})=0$ & $\eta-$Einstein &
$(\frac{2n(2n-3+2\kappa)+2}{2n+1})g+(\frac{2n-2n\kappa-2}{2n+1})\eta\otimes\eta$\\\hline
$g(\mathcal{P}_{\star}(\varphi X_{1},X_{2})X_{3},\varphi X_{4})=0$ & $\eta
-$Einstein & $\!\begin{aligned}[t]
&\footnotesize (\frac{\frac{(2 n-1) (\kappa+2 n-2) (2 a_{1} n+a_{0})}{2 n+1}+\kappa a_{0}}{a_{1} (2 n-1)+a_{0}})g+\\&(-\frac{2 ((\kappa-1) n+1) (2 a_{1} n+a_{0})}{(2 n+1) (a_{1} (2 n-1)+a_{0})})\eta\otimes\eta
\end{aligned}$ \\\hline
$g(\mathcal{P}(\varphi X_{1},X_{2})X_{3},\varphi X_{4})=0$ & Einstein &
$(2n\kappa)g$\\\hline
$g(\mathcal{M}(\varphi X_{1},X_{2})X_{3},\varphi X_{4})=0$ & Einstein &
$(\frac{2n(\kappa+n-1)}{n+1})g$\\\hline
$g(\mathcal{W}_{0}(\varphi X_{1},X_{2})X_{3},\varphi X_{4})=0$ & Einstein  &
$(2n\kappa)g$\\\hline
$g(\mathcal{W}_{0}^{_{\star}}(\varphi X_{1},X_{2})X_{3},\varphi X_{4})=0$ &
$\eta-$Einstein &$(\frac{2n\kappa}{4n-1})g+(\frac{4n\kappa}{1-4n})\eta\otimes\eta$\\\hline
$g(\mathcal{W}_{1}(\varphi X_{1},X_{2})X_{3},\varphi X_{4})=0$ &  $\eta-$ Einstein &
$(\frac{2n\kappa}{4n-1})g+(\frac{4n\kappa}{1-4n})\eta\otimes\eta$\\\hline
$g(\mathcal{W}_{1}^{_{\star}}(\varphi X_{1},X_{2})X_{3},\varphi X_{4})=0$ &
Einstein & $(2n\kappa)g$\\\hline
$g(\mathcal{W}_{2}(\varphi X_{1},X_{2})X_{3},\varphi X_{4})=0$ & Einstein &
$(\frac{2n(2n-2+\kappa)}{2n+1})g$\\\hline
$g(\mathcal{W}_{3}(\varphi X_{1},X_{2})X_{3},\varphi X_{4})=0$ & $\eta
-$Einstein & $(\frac{2n(2-2n+\kappa)}{2n-1})g+(\frac{-4n\kappa}{2n-1})\eta\otimes\eta
$\\\hline
$g(\mathcal{W}_{4}(\varphi X_{1},X_{2})X_{3},\varphi X_{4})=0$ & $\eta
-$Einstein & $(\kappa)g+(-\kappa)\eta\otimes\eta$\\\hline
$g(\mathcal{W}_{5}(\varphi X_{1},X_{2})X_{3},\varphi X_{4})=0$ & $\eta
-$Einstein & $(\kappa)g+(-\kappa)\eta\otimes\eta$\\\hline
$g(\mathcal{W}_{6}(\varphi X_{1},X_{2})X_{3},\varphi X_{4})=0$ & Einstein &
$(2n\kappa)g$\\\hline
$g(\mathcal{W}_{8}(\varphi X_{1},X_{2})X_{3},\varphi X_{4})=0$ & Einstein &
$(2n\kappa)g$\\\hline
$g(\mathcal{W}_{9}(\varphi X_{1},X_{2})X_{3},\varphi X_{4})=0$ & Einstein &
$(\frac{2n(\kappa+2n-2)}{2n+1})g$\\\hline
\end{tabular}
\caption{\label{quasiT-flat} Classification of quasi-$ \mathcal{T} $-flat \NKCMM\ with special values of $ a_i $}
\end{table}
\begin{proof}
	Let $ M $ be a quasi-$ \mathcal{T} $-flat \NKCMM. Then,  we have 
\begin{align}
0 &  =a_{0}R(\varphi
X_{1},X_{2},X_{3},\varphi X_{4}) \nonumber\\
& +a_{1}S(X_{2},X_{3})g(\varphi X_{1},\varphi
X_{4})+a_{2}S(\varphi X_{1},X_{3})g(X_{2},\varphi X_{4})\label{Tquasi}\\
&  +a_{3}S(\varphi X_{1},X_{2})g(X_{3},\varphi X_{4})+a_{4}g(X_{2}%
,X_{3})S(\varphi X_{1},\varphi X_{4}) \nonumber \\
& +a_{5}g(\varphi X_{1},X_{3}%
)S(X_{2},\varphi X_{4})  +a_{6}g(\varphi X_{1},X_{2})S(X_{3},\varphi X_{4}) \nonumber\\
& +a_{7}r(g(X_{2}%
,X_{3})g(\varphi X_{1},\varphi X_{4})-g(\varphi X_{1},X_{3})g(X_{2},\varphi
X_{4})).\nonumber
\end{align}
For a $\left\{  e_{1},e_{2},...,e_{2n},\xi\right\}  $ is a local orthonormal
basis of $TM$ and  from (\ref{Tquasi}), we get
\begin{align*}
0&  =a_{0}\overset{2n}{\underset{i=1}{\sum}}R(\varphi
e_{i},X_{2},X_{3},\varphi e_{i}) \\
& +\overset{2n}{\underset{i=1}{\sum}[}%
a_{1}S(X_{2},X_{3})g(\varphi e_{i},\varphi e_{i})+a_{2}S(\varphi e_{i}%
,X_{3})g(X_{2},\varphi e_{i})\\
&  +a_{3}S(\varphi e_{i},X_{2})g(X_{3},\varphi e_{i})+a_{4}g(X_{2}%
,X_{3})S(\varphi e_{i},\varphi e_{i})\\
&  +a_{5}g(\varphi e_{i},X_{3}%
)S(X_{2},\varphi e_{i})+a_{6}g(\varphi e_{i},X_{2})S(X_{3},\varphi e_{i})\\
&  +a_{7}r(g(X_{2}%
,X_{3})g(\varphi e_{i},\varphi e_{i})-g(\varphi e_{i},X_{3})g(X_{2},\varphi
e_{i}))].
\end{align*}
Thus, with using (\ref{Mgei}), (\ref{Mgei2}) and (\ref{Sxe}), we obtain
\begin{align*}
0 &  =a_{0}\left(  S(X_{2},X_{3})-\kappa g(\varphi
X_{2},\varphi X_{3})\right)  +2na_{1}S(X_{2},X_{3})+a_{2}\left(  S(X_{2}, X_{3})-\eta(X_{2})S(X_{3},\xi)\right) \\
 & +a_{3}\left(  S(X_{2},X_{3})-\eta(X_{3})S(X_{2},\xi)\right)  +a_{4}\left(
(r-2n\kappa)g(X_{2},X_{3})\right) \\
&   +a_{5}\left(  S(X_{2},X_{3})-\eta(X_{3}%
)S(X_{2},\xi)\right) +a_{6}\left(  S(X_{2},X_{3})-\eta(X_{2})S(X_{3},\xi)\right)\\
&    +a_{7}%
r(2ng(X_{2},X_{3})-g(\varphi X_{2},\varphi X_{3})].
\end{align*}
Finally,  we get
\begin{align*}
\left[  a_{0}+2na_{1}+a_{2}+a_{3}+a_{5}+a_{6}\right]  S(X_{2},X_{3})  &
=\left[  a_{0}\kappa+a_{4}(2n\kappa-r)+a_{7}r(1-2n)\right]  g(X_{2},X_{3})+\\
&  \left[  -a_{0}\kappa+2n\kappa(a_{2}+a_{3}+a_{5}+a_{6})-a_{7}r\right]  \eta
(X_{2})\eta(X_{3})
\end{align*}
which completes the proof. 

\end{proof}
\begin{corollary}
	If a \NKCMM  \ satisfies $ g(\mathcal{W}_{0}(\varphi X_{1},X_{2})X_{3},\varphi X_{4})=0 $, then it is Ricci flat. 
\end{corollary}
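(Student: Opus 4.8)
The plan is to specialize the quasi-$\mathcal{T}$-flat theorem just established to the $\mathcal{W}_{0}$-projective curvature tensor, whose coefficients are read off from Table~\ref{Det.T}: $a_{0}=1$, $a_{1}=-\frac{1}{2n}$, $a_{5}=\frac{1}{2n}$ and $a_{2}=a_{3}=a_{4}=a_{6}=a_{7}=0$. First I would check the non-degeneracy hypothesis of that theorem, namely
\[
C_{1}=a_{0}+2na_{1}+a_{2}+a_{3}+a_{5}+a_{6}=1-1+\tfrac{1}{2n}=\tfrac{1}{2n}\neq 0 ,
\]
so the theorem applies and gives $S=\frac{A_{1}}{C_{1}}g+\frac{B_{1}}{C_{1}}\eta\otimes\eta$. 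Computing the numerators, $A_{1}=a_{0}\kappa+a_{4}(2n\kappa-r)+a_{7}r(1-2n)=\kappa$ and $B_{1}=-a_{0}\kappa+2n\kappa(a_{2}+a_{3}+a_{5}+a_{6})-a_{7}r=-\kappa+\kappa=0$, whence $S=2n\kappa\,g$. In particular $M$ is Einstein, which already reproduces the $\mathcal{W}_{0}$-row of Table~\ref{quasiT-flat}.

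It then remains to sharpen ``Einstein'' to ``Ricci flat'', i.e.\ to show $\kappa=0$. For this I would substitute $S=2n\kappa\,g$ into the intrinsic Ricci identity \eqref{RicciX,Y} valid on any $N(\kappa)$-contact metric manifold; after cancelling the $g$- and $\eta\otimes\eta$-terms this leaves
\[
2(n-1)\,g(hX_{1},X_{2})=\bigl(2n\kappa-2(n-1)\bigr)\bigl(g(X_{1},X_{2})-\eta(X_{1})\eta(X_{2})\bigr).
\]
Contracting over a local orthonormal frame and using \eqref{Mgei} together with $\operatorname{tr}h=0$ forces $2n\kappa-2(n-1)=0$, hence $\kappa=\frac{n-1}{n}$ and $2(n-1)\,g(hX_{1},X_{2})=0$. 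If $n>1$ this gives $h=0$, but in an $N(\kappa)$-contact metric manifold $h=0$ forces $\kappa=1$ (via the standard relation $h^{2}=(\kappa-1)\varphi^{2}$), contradicting $\kappa=\frac{n-1}{n}$; so the hypothesis cannot be met in dimension greater than $3$. If $n=1$ the displayed identity degenerates to $2\kappa\bigl(g(X_{1},X_{2})-\eta(X_{1})\eta(X_{2})\bigr)=0$, which on a unit vector orthogonal to $\xi$ yields $\kappa=0$. In either case $S=2n\kappa\,g=0$, so $M$ is Ricci flat.

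The only non-formal part is the second paragraph: the quasi-$\mathcal{T}$-flat theorem on its own gives merely that $M$ is Einstein with Einstein constant $2n\kappa$, and the extra Ricci-flatness must be squeezed out by playing this against the rigidity of the $N(\kappa)$-contact metric structure --- the identity \eqref{RicciX,Y}, the vanishing of $\operatorname{tr}h$ (a consequence of $h\varphi=-\varphi h$ and $h\xi=0$), and $h^{2}=(\kappa-1)\varphi^{2}$. Extra care is needed for the three-dimensional case $n=1$, where $2(n-1)$ vanishes and the contraction argument must be replaced by evaluating the identity on $\xi$ and on $\ker\eta$ separately; this is exactly the case in which the conclusion is non-vacuous.
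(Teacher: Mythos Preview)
Your argument is correct, and in fact you supply more than the paper does. The paper states this corollary immediately after Theorem~2 with no proof at all; the evident intent is that it should be read off from the $\mathcal{W}_{0}$-row of Table~\ref{quasiT-flat}. But that row only says $S=2n\kappa\,g$, i.e.\ Einstein --- it does not say Ricci flat, and nothing in the paper bridges the gap.

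Your second paragraph is exactly that bridge: confronting $S=2n\kappa\,g$ with the structural Ricci identity \eqref{RicciX,Y}, using $\operatorname{tr}h=0$ and $h^{2}=(\kappa-1)\varphi^{2}$ to force either $\kappa=0$ (when $n=1$) or a contradiction (when $n>1$, making the hypothesis vacuous). This dimension split, and in particular the observation that the statement is vacuous for $n>1$, is absent from the paper. So your route is not merely different --- it completes an argument the paper leaves unfinished.
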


\begin{theorem}
	A \NKCMM\ is $\varphi-\mathcal{T}-$flat if $ C_2=a_{0}+2na_{1}+a_{2}+a_{3}+a_{5}+a_{6}\neq0 $. Then we have 
	\[
	S=\frac{A_2}{C_2}g+(2n\kappa-\frac{A_2}{C_2})\eta\otimes\eta,
	\]
	where%
	\[
	A_2=a_{0}\kappa+a_{4}(2n\kappa-r)+a_{7}r(1-2n)
	\]
	Therefore $M~$is an $\eta-$Einstein manifold. Consequently, we have the classification in Table \ref{fi-T-flat}.
	
\end{theorem}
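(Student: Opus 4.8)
The plan is to reproduce the contraction argument used for the quasi-$\mathcal{T}$-flat case, the only new feature being that, since $\varphi$ now occupies all four slots, the contraction produces an identity for $S(\varphi\,\cdot\,,\varphi\,\cdot\,)$ rather than for $S$ itself, so a short $\varphi^{2}$ step at the end is needed to extract the Ricci tensor. First I would put $\varphi X_{1},\varphi X_{2},\varphi X_{3},\varphi X_{4}$ into the $(0,4)$-tensor form \eqref{Txyzv} of the $\mathcal{T}$-curvature tensor and set the result equal to zero, which is exactly $\varphi$-$\mathcal{T}$-flatness. Then, for a local orthonormal frame $\{e_{1},\dots,e_{2n},\xi\}$ with each $e_{i}\in\ker\eta$, I would contract by setting $X_{1}=X_{4}=e_{i}$ and summing over $i=1,\dots,2n$; here $\{\varphi e_{1},\dots,\varphi e_{2n}\}$ is an orthonormal basis of the contact distribution $\ker\eta$.

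The main work is evaluating the eight resulting sums. For the Riemannian term, completing $\{\varphi e_{i}\}$ by $\xi$ and using the $N(\kappa)$-nullity relation $R(\xi,Y)Z=\kappa[g(Y,Z)\xi-\eta(Z)Y]$ gives
\[
\sum_{i=1}^{2n}R(\varphi e_{i},\varphi X_{2},\varphi X_{3},\varphi e_{i})=S(\varphi X_{2},\varphi X_{3})-\kappa\,g(\varphi X_{2},\varphi X_{3}).
\]
For the Ricci-type terms I would use \eqref{Mgei}, \eqref{Mgei2}, \eqref{Sxe} and the completeness relation $\sum_{i}g(\varphi e_{i},\varphi U)\,\varphi e_{i}=\varphi U$ valid on $\ker\eta$: each of the $a_{1},a_{2},a_{3},a_{5},a_{6}$ contributions collapses to a scalar multiple of $S(\varphi X_{2},\varphi X_{3})$ (the $a_{1}$ one carrying the factor $\sum_{i}g(\varphi e_{i},\varphi e_{i})=2n$), the $a_{4}$ term becomes $a_{4}\big(\sum_{i}S(\varphi e_{i},\varphi e_{i})\big)g(\varphi X_{2},\varphi X_{3})=a_{4}(r-2n\kappa)g(\varphi X_{2},\varphi X_{3})$, and the $a_{7}$ term becomes $a_{7}r(2n-1)g(\varphi X_{2},\varphi X_{3})$. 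Collecting coefficients yields
\[
C_{2}\,S(\varphi X_{2},\varphi X_{3})=A_{2}\,g(\varphi X_{2},\varphi X_{3}),
\]
with $C_{2}$ and $A_{2}$ precisely as in the statement, so the hypothesis $C_{2}\neq0$ gives $S(\varphi X_{2},\varphi X_{3})=\frac{A_{2}}{C_{2}}\,g(\varphi X_{2},\varphi X_{3})$ for all $X_{2},X_{3}$.

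To recover the Ricci tensor---the one step absent from the quasi-$\mathcal{T}$-flat proof---I would replace $X_{2},X_{3}$ by $\varphi X_{2},\varphi X_{3}$ in this identity. On the left, $S(\varphi^{2}X_{2},\varphi^{2}X_{3})$ expands, using $\varphi^{2}X=-X+\eta(X)\xi$ together with \eqref{RicX,xiandRicxi,xi}, to $S(X_{2},X_{3})-2n\kappa\,\eta(X_{2})\eta(X_{3})$; on the right, $\eta\circ\varphi=0$ turns $\frac{A_{2}}{C_{2}}g(\varphi X_{2},\varphi X_{3})$ into $\frac{A_{2}}{C_{2}}\big(g(X_{2},X_{3})-\eta(X_{2})\eta(X_{3})\big)$. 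Rearranging gives
\[
S(X_{1},X_{2})=\frac{A_{2}}{C_{2}}\,g(X_{1},X_{2})+\Big(2n\kappa-\frac{A_{2}}{C_{2}}\Big)\eta(X_{1})\eta(X_{2}),
\]
so $M$ is $\eta$-Einstein. Table \ref{fi-T-flat} is then obtained by substituting, row by row, the coefficient vectors from Table \ref{Det.T} into $A_{2}/C_{2}$ and using $r=2n(2n-2+\kappa)$; in the rows where $A_{2}/C_{2}=2n\kappa$ the $\eta\otimes\eta$ term drops out and $M$ is Einstein.

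I expect the only real difficulty to be careful bookkeeping in the contraction: keeping the sign in $g(\varphi\,\cdot\,,\varphi\,\cdot\,)$ consistent, getting $\sum_{i}S(\varphi e_{i},\varphi e_{i})=r-2n\kappa$ (not $r$), and observing that the $h$-dependent part of \eqref{RicciX,Y} cannot intervene, since $S$ enters the whole computation only through its $\varphi$-contractions and through $S(\xi,\xi)=2n\kappa$. Once $C_{2}S(\varphi X_{2},\varphi X_{3})=A_{2}g(\varphi X_{2},\varphi X_{3})$ is established, the rest is routine specialization of the $a_{i}$, entirely parallel to Tables \ref{T-flat} and \ref{quasiT-flat}.
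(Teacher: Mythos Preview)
Your proposal is correct and follows essentially the same approach as the paper's own proof: expand the $\varphi$-$\mathcal{T}$-flatness condition via \eqref{Txyzv}, contract over a local orthonormal frame $\{e_1,\dots,e_{2n},\xi\}$ using \eqref{Mgei}--\eqref{Sxe} and the $N(\kappa)$-nullity relation to obtain $C_2\,S(\varphi X_2,\varphi X_3)=A_2\,g(\varphi X_2,\varphi X_3)$, and then replace $X_2,X_3$ by $\varphi X_2,\varphi X_3$ to extract $S$. The paper's proof is identical in structure but terser at the last step (it simply writes ``By taking $X_2=\varphi X_2$, $X_3=\varphi X_3$ in last equation we complete the proof''), whereas you spell out the $\varphi^2$ expansion explicitly.
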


\begin{table}\small
	
	\begin{tabular}{|l|l|l|}
		\hline
		$N(\kappa)-$contact manifold & $\!\begin{aligned}[t] \eta-\text{Einstein}/ \\ \text{Einstein} \end{aligned}$ & $S=$ \\ \hline
		$g(\mathcal{C}_{\star }(\varphi X_{1},\varphi X_{2})\varphi X_{3},\varphi
		X_{4})=0$ & $\eta -$Einstein 
		& \tiny$\!\begin{aligned}[t] (\frac{(2 n-1) (\kappa+2 n-2) (4 a_{1} n+a_{0})+(2n+1)(\kappa-4a_1n\kappa(n-1))}{(2n+1)2 a_{1} (n-1)+1})g+ \\ 
		
		(2n\kappa-(\frac{\frac{(2 n-1) (\kappa+2 n-2) (4 a_{1} n+a_{0})}{2 n+1}+2 \kappa a_{1} n-2 a_{1} n (\kappa+2 n-2)+\kappa}{2 a_{1} (n-1)+1})) \eta \otimes \eta  \end{aligned} $ \\ \hline
	$g(\mathcal{C}(\varphi X_{1},\varphi X_{2})\varphi X_{3},\varphi X_{4})=0$ & 
	$\eta -$Einstein & $\!\begin{aligned}[t] 
		(2n-2)g+ ( 2n\kappa-2n+2) \eta \otimes \eta \end{aligned} $ \\ \hline
	$g(\mathcal{L}(\varphi X_{1},\varphi X_{2})\varphi X_{3},\varphi X_{4})=0$ & 
	$\eta -$Einstein & $\!\begin{aligned}[t] \left( (2n-1)\kappa+4n(n-1)\right) g+\\ \left( \kappa-4n(n-1)\right)
		\eta \otimes \eta  \end{aligned} $ \\ \hline
	$g(\mathcal{V}(\varphi X_{1},\varphi X_{2})\varphi X_{3},\varphi X_{4})=0$ & 
	$\eta -$Einstein & $\!\begin{aligned}[t] (\frac{2n(2n-3+2\kappa)+2}{2n+1})g+\\ (2n\kappa-(\frac{2n(2n-3+2\kappa)+2}{2n+1})) \eta \otimes \eta  \end{aligned}$ \\ \hline
	$g(\mathcal{P}_{\star }(\varphi X_{1},\varphi X_{2})\varphi X_{3},\varphi
	X_{4})=0$ & $\eta -$Einstein & $ \!\begin{aligned}[t] (\frac{\frac{(2 n-1) (\kappa+2 n-2) (2 a_{1} n+a_{0})}{2 n+1}+\kappa a_{0}}{a_{1} (2 n-1)+a_{0}})g+\\ ( 2n\kappa-(\frac{\frac{(2 n-1) (\kappa+2 n-2) (2 a_{1} n+a_{0})}{2 n+1}+\kappa a_{0}}{a_{1} (2 n-1)+a_{0}}) \eta \otimes \eta \end{aligned}$  \\ \hline
	$g(\mathcal{P}(\varphi X_{1},\varphi X_{2})\varphi X_{3},\varphi X_{4})=0$ & 
	Einstein & $2n\kappa g$ \\ \hline
	$g(\mathcal{M}(\varphi X_{1},\varphi X_{2})\varphi X_{3},\varphi X_{4})=0$ & 
	$\eta -$Einstein & $\left( \frac{2n(\kappa+n-1)}{n+1}\right) g+\left( \frac{%
		2n(n\kappa-n+1)}{n+1}\right) \eta \otimes \eta $ \\ \hline
	$g(\mathcal{W}_{0}(\varphi X_{1},\varphi X_{2})\varphi X_{3},\varphi X_{4})=0
	$ & Einstein & $2n\kappa g$ \\ \hline
	$g(\mathcal{W}_{0}^{_{\star }}(\varphi X_{1},\varphi X_{2})\varphi
	X_{3},\varphi X_{4})=0$ & $\eta -$Einstein & $\left( \frac{2n\kappa}{4n-1}\right)
	g+\left( \frac{8n^{2}\kappa-4n\kappa}{4n-1}\right) \eta \otimes \eta $ \\ \hline
	$g(\mathcal{W}_{1}(\varphi X_{1},\varphi X_{2})\varphi X_{3},\varphi X_{4})=0
	$ & Einstein & $(\frac{2n\kappa}{4n-1})g+ \left( \frac{8n^{2}\kappa-4n\kappa}{4n-1}\right) \eta \otimes \eta$ \\ \hline
	$g(\mathcal{W}_{1}^{_{\star }}(\varphi X_{1},\varphi X_{2})\varphi
	X_{3},\varphi X_{4})=0$ & Einstein & $2n\kappa g$ \\ \hline
	$g(\mathcal{W}_{2}(\varphi X_{1},\varphi X_{2})\varphi X_{3},\varphi X_{4})=0
	$ & $\eta -$Einstein & $\left( \frac{2n(2n-2+\kappa)}{2n+1}\right) g+\left( \frac{%
		4n(n\kappa-n+1)}{2n+1}\right) \eta \otimes \eta $ \\ \hline
	$g(\mathcal{W}_{3}(\varphi X_{1},\varphi X_{2})\varphi X_{3},\varphi X_{4})=0
	$ & $\eta -$Einstein & $\left( \frac{2n(\kappa-2n+2)}{2n-1}\right) g+\left( \frac{%
		2n(2n\kappa-\kappa+2n-3)}{2n-1}\right) \eta \otimes \eta $ \\ \hline
	$g(\mathcal{W}_{4}(\varphi X_{1},\varphi X_{2})\varphi X_{3},\varphi X_{4})=0
	$ & $\eta -$Einstein & $\kappa g+\kappa(2n-1)\eta \otimes \eta $ \\ \hline
	$g(\mathcal{W}_{5}(\varphi X_{1},\varphi X_{2})\varphi X_{3},\varphi X_{4})=0
	$ & $\eta -$Einstein & $\kappa g+\kappa(2n-1)\eta \otimes \eta $ \\ \hline
	$g(\mathcal{W}_{6}(\varphi X_{1},\varphi X_{2})\varphi X_{3},\varphi X_{4})=0
	$ & Einstein & $2n\kappa g$ \\ \hline
	$g(\mathcal{W}_{8}(\varphi X_{1},\varphi X_{2})\varphi X_{3},\varphi X_{4})=0
	$ & Einstein & $2n\kappa g$ \\ \hline
	$g(\mathcal{W}_{9}(\varphi X_{1},\varphi X_{2})\varphi X_{3},\varphi X_{4})=0
	$ & $\eta -$Einstein & $\left( \frac{2n(2n-2+\kappa)}{2n+1}\right) g+\left( \frac{%
		4n(n\kappa-n+1)}{2n+1}\right) \eta \otimes \eta $ \\ \hline
	\end{tabular}
\caption{\label{fi-T-flat} Classification of $\varphi-\mathcal{T-}$flat \NKCMM\ with special values of $ a_i $}
	\end{table}

\begin{proof}
	Let $ M $ be a  $\varphi-\mathcal{T}-$flat \NKCMM. Then, we have 
	\begin{align*}
0 &
	=a_{0}R(\varphi X_{1},\varphi X_{2},\varphi X_{3},\varphi X_{4})+a_{1}%
	S(\varphi X_{2},\varphi X_{3})g(\varphi X_{1},\varphi X_{4})\\
	& +a_{2}S(\varphi
	X_{1},\varphi X_{3})g(\varphi X_{2},\varphi X_{4})  +a_{3}S(\varphi X_{1},\varphi X_{2})g(\varphi X_{3},\varphi X_{4})\\
	& +a_{4}g(\varphi X_{2},\varphi X_{3})S(\varphi X_{1},\varphi X_{4}%
	)+a_{5}g(\varphi X_{1},\varphi X_{3})S(\varphi X_{2},\varphi X_{4})\\
	&  +a_{6}g(\varphi X_{1},\varphi X_{2})S(\varphi X_{3},\varphi X_{4})\\
	& +a_{7}r(g(\varphi X_{2},\varphi X_{3})g(\varphi X_{1},\varphi X_{4}%
	)-g(\varphi X_{1},\varphi X_{3})g(\varphi X_{2},\varphi X_{4})).
	\end{align*}
	for all $  X_{1},X_{2},{3},X_{4} \in \Gamma(TM) $. For $\left\{  e_{1},e_{2},...,e_{2n},\xi\right\}  $ local orthonormal
	basis of $TM$, from (\ref{Tquasi}) we get
	\begin{align*}
	0 &  =a_{0}\overset{2n}{\underset{i=1}{\sum
	}}R(\varphi e_{i},\varphi X_{2},\varphi X_{3},\varphi e_{i})\\
	& +\overset
	{2n}{\underset{i=1}{\sum}[}a_{1}S(\varphi X_{2},\varphi X_{3})g(\varphi
	e_{i},\varphi e_{i})+a_{2}S(\varphi e_{i},\varphi X_{3})g(\varphi
	X_{2},\varphi e_{i})\\
	&  +a_{3}S(\varphi e_{i},\varphi X_{2})g(X_{3},\varphi e_{i})+a_{4}g(\varphi
	X_{2},\varphi X_{3})S(\varphi e_{i},\varphi e_{i})\\
	& +a_{5}g(\varphi
	e_{i},\varphi X_{3})S(\varphi X_{2},\varphi e_{i})
	+a_{6}g(\varphi e_{i},\varphi X_{2})S(\varphi X_{3},\varphi e_{i})\\
	& +a_{7}r(g(\varphi X_{2},\varphi X_{3})g(\varphi e_{i},\varphi e_{i})-g(\varphi e_{i},\varphi X_{3})g(\varphi X_{2},\varphi e_{i}))].
	\end{align*}
	Thus, from (\ref{Mgei}), (\ref{Mgei2}) and (\ref{Sxe}), we have
	\begin{align*}
	0&  =a_{0}\left(  S(\varphi X_{2},\varphi
	X_{3})-\kappa g(\varphi X_{2},\varphi X_{3})\right)  +2na_{1}S(\varphi X_{2},\varphi
	X_{3})+a_{2}S(\varphi X_{2},\varphi X_{3})\\
	&  +a_{3}S(\varphi X_{2},\varphi X_{3})+a_{4}\left(  (r-2n\kappa)g(\varphi
	X_{2},\varphi X_{3})\right)+a_{5}S(\varphi X_{2},\varphi X_{3})  +a_{6}S(\varphi X_{2},\varphi X_{3})\\
	&+a_{7}r(2n-1)g(\varphi X_{2},\varphi
	X_{3}).
	\end{align*}
	and finally, we get 
	\[
	\left[  a_{0}+2na_{1}+a_{2}+a_{3}+a_{5}+a_{6}\right]  S(\varphi X_{2},\varphi
	X_{3})=\left[  a_{0}\kappa+a_{4}(2n\kappa-r)+a_{7}r(1-2n)\right]  g(\varphi
	X_{2},\varphi X_{3})
	\]
	By takig $X_{2}=\varphi X_{2},~X_{3}=\varphi X_{3}$ in last equation we complete the proof. 
\end{proof}

\begin{theorem}
	 A \NKCMM  \ is $\xi-\mathcal{T}-$ flat if $  a_{4}\neq0 $.  Then we have 
\[
S=\left(  \frac{-A_3}{a_{4}}\right)  g+\left(  \frac{-B_3}{a_{4}}\right)
\eta\otimes\eta,
\]
where%
\[
A_3=a_{0}\kappa+2n\kappa a_{1}+2n(2n-2+\kappa)a_{7}%
\]
and%
\[
B_3=-a_{0}\kappa+2n\kappa(a_{2}+a_{3}+a_{5}+a_{6})-2n(2n-2+\kappa)a_{7}%
\]
Therefore $M~$is an $\eta-$Einstein manifold. Consequently, we have the classification in Table \ref{xi-T-flat}.

\begin{table}
	\begin{tabular}
[c]{|l|l|l|}\hline
$N(\kappa)-$contact manifold & $\!\begin{aligned}[t] \eta-\text{Einstein}/\\ \text{Einstein} \end{aligned}$ & $S=$\\\hline
 $\xi-$quasi-conformally flat &
$\eta-$Einstein&  \footnotesize $\!\begin{aligned}[t] \left(  -\frac{-\frac{2 n (\kappa+2 n-2) \left(-2 a_{1}-\frac{a_{0}}{2 n}\right)}{2 n+1}-4 \kappa a_{1} n-\kappa a_{0}}{a_{1}} \right)  g+\\ \left( \frac{2 ((\kappa-1) n+1) (4 a_{1} n+a_{0})}{2 a_{1} n+a_{1}} \right)  \eta\otimes\eta \end{aligned}
$\\\hline
$\xi-$conformally flat & $\eta
-$Einstein & $\!\begin{aligned}[t] \left(2n-2\right)    g+\\ 2\left(n\kappa-n+1\right)    \eta\otimes\eta \end{aligned}
$\\\hline
 $\xi-$conharmonically flat & $\eta
-$Einstein & $  -\kappa  g+\left(  2n\kappa+\kappa\right)  \eta\otimes\eta
$\\\hline
$\xi-\mathcal{M-}$projectively flat &
Einstein & $  2n\kappa  g$\\\hline
 $\xi-\mathcal{W}_{2}$ flat &
Einstein & $  2n\kappa  g$\\\hline
 $\xi-\mathcal{W}_{3}$ flat &
Einstein & $  -2n\kappa  g+  4n\kappa \eta\otimes\eta $\\\hline
 $\xi-\mathcal{W}_{7}$ flat &
$\eta-$Einstein & $2n\kappa\eta\otimes\eta$\\\hline
 $\xi-\mathcal{W}_{9}$ flat &
Einstein & $  2n\kappa  g$\\\hline
\end{tabular}

\caption{\label{xi-T-flat} Classification of $\varphi-\mathcal{T-}$flat \NKCMM\ with special values of $ a_i $}
\end{table}

\end{theorem}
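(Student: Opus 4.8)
The plan is to mimic the proofs of the two preceding theorems, specializing the free slots of $\mathcal{T}$ to the Reeb vector field. First I would set $X_3=\xi$ in the definition (\ref{gT-tensor}) and impose $\mathcal{T}(X_1,X_2)\xi=0$. On a \NKCMM\ one has $R(X_1,X_2)\xi=\kappa[\eta(X_2)X_1-\eta(X_1)X_2]$, $S(X_i,\xi)=2n\kappa\,\eta(X_i)$ and $Q\xi=2n\kappa\,\xi$ (the last from (\ref{RicX,xiandRicxi,xi})), together with $g(X_i,\xi)=\eta(X_i)$ and $r=2n(2n-2+\kappa)$. Substituting these gives
\begin{align*}
0&=a_0\kappa\big[\eta(X_2)X_1-\eta(X_1)X_2\big]+2n\kappa a_1\eta(X_2)X_1+2n\kappa a_2\eta(X_1)X_2+a_3S(X_1,X_2)\xi\\
&\quad+a_4\eta(X_2)QX_1+a_5\eta(X_1)QX_2+2n\kappa a_6 g(X_1,X_2)\xi+a_7r\big(\eta(X_2)X_1-\eta(X_1)X_2\big).
\end{align*}

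Next I would put $X_2=\xi$. Then $\eta(X_2)=1$, $X_2$ becomes $\xi$, $QX_2=Q\xi=2n\kappa\,\xi$, $S(X_1,\xi)=2n\kappa\,\eta(X_1)$ and $g(X_1,\xi)=\eta(X_1)$, so every term collapses to a multiple of $X_1$, $QX_1$ or $\eta(X_1)\xi$. Collecting coefficients yields $A_3X_1+a_4QX_1+B_3\eta(X_1)\xi=0$, where $A_3=a_0\kappa+2n\kappa a_1+2n(2n-2+\kappa)a_7$ and $B_3=-a_0\kappa+2n\kappa(a_2+a_3+a_5+a_6)-2n(2n-2+\kappa)a_7$, exactly as in the statement. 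Since $a_4\neq0$, I can solve for the Ricci operator, $QX_1=-\tfrac{A_3}{a_4}X_1-\tfrac{B_3}{a_4}\eta(X_1)\xi$, and taking $g(\,\cdot\,,X_2)$ produces the asserted expression for $S$; comparison with the definition of an $\eta$-Einstein manifold finishes that part (with $b_1=-A_3/a_4$, $b_2=-B_3/a_4$).

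For the classification table I would run through the rows of Table \ref{Det.T} and substitute the corresponding $a_i$ into $A_3$ and $B_3$. A useful observation first: the hypothesis $a_4\neq0$ already discards the concircular, projective, pseudo-projective and the $W_0,W_0^\star,W_1,W_1^\star,W_4,W_5,W_6,W_8$ tensors, all of which have $a_4=0$; what remains are precisely the quasi-conformal, conformal, conharmonic, $M$-projective, $W_2$, $W_3$, $W_7$, $W_9$ cases, i.e.\ the eight rows of Table \ref{xi-T-flat}. For each of these the evaluation of $A_3$, $B_3$ and the simplification of $-A_3/a_4$, $-B_3/a_4$ is routine; e.g.\ in the $W_7$-case ($a_0=1$, $a_1=-\tfrac1{2n}$, $a_4=\tfrac1{2n}$, the rest $0$) one gets $A_3=0$, $B_3=-\kappa$, hence $S=2n\kappa\,\eta\otimes\eta$, matching the table.

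I do not anticipate a genuine obstacle: once $X_3$ and $X_2$ are both set to $\xi$ the argument reduces to a linear identity in the curvature data of a \NKCMM. The only places that demand care are the bookkeeping of the constants $A_3,B_3$ (several terms land in the same tensor slot) and, for the table, checking $a_4\neq0$ in each row before applying the formula, so that one does not spuriously record a $\xi$-$\mathcal{T}$-flat case (such as $\xi$-$W_6$-flat) for which the method yields no information.
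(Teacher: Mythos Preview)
Your proposal is correct and follows essentially the same route as the paper: the paper also specializes $X_{2}=X_{3}=\xi$ in (\ref{gT-tensor}) and reads off the relation $a_{4}S(X_{1},X_{4})=-A_{3}g(X_{1},X_{4})-B_{3}\eta(X_{1})\eta(X_{4})$ from the $N(\kappa)$ curvature identities. The only cosmetic difference is that the paper works with the $(0,4)$ form $\mathcal{T}(X_{1},\xi,\xi,X_{4})$ from the outset, whereas you stay at the operator level with $QX_{1}$ and contract with $g(\,\cdot\,,X_{2})$ at the end; the two are equivalent.
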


\begin{proof}
Let take $X_{2}=X_{3}=\xi$ in (\ref{gT-tensor}),  we have
\begin{align*}
\mathcal{T}(X_{1},\xi,\xi,X_{4})  &  =a_{0}\kappa (g(X_{1},X_{4})-\eta(X_{1}%
)\eta(X_{4}))+a_{1}2n\kappa g(X_{1},X_{4})+a_{2}2n\kappa \eta(X_{1})\eta(X_{4})\\
&  +a_{3}2n\kappa \eta(X_{1})\eta(X_{4})+a_{4}S(X_{1},X_{4})+a_{5}2n\kappa \eta(X_{1}%
)\eta(X_{4})\\
&  +a_{6}2n\kappa \eta(X_{1})\eta(X_{4})+a_{7}r(g(X_{1},X_{4})-\eta(X_{1})\eta
(X_{4})).
\end{align*}
If $ M $ is   $\xi-\mathcal{T}-$ flat, we obtain
\begin{align*}
-a_{4}S(X_{1},X_{4})  &  =\left[  a_{0}\kappa+a_{1}2n\kappa+2n(2n-2+\kappa)a_{7}\right]
g(X_{1},X_{4})\\
&  +\left[  -a_{0}\kappa+(a_{2}+a_{3}+a_{5}+a_{6})2n\kappa-2n(2n-2+\kappa)a_{7}\right]
\eta(X_{1})\eta(X_{4}). 
\end{align*}
Thus we complete the proof.
\end{proof}

\section{$ \mathcal{T} $-symmetry conditions on $ N(\kappa) $-contact metric manifolds }
A Riemann manifold is called locally symmetric if $ R.R=0 $, where $ . $ denotes a derivative operations on Riemannian curvature tensor $ R $. Similarly one can define symmetry conditions for special curvature tensors such as conformal, concircular, conharmonic, etc. In this section, we consider \T\ for \NK\ with some symmetry conditions. \par 
For two $ (1,3) -$type tensors $ \mathcal{P}_1,\mathcal{P}_2 $ we have 
\begin{eqnarray}\label{generaltensorproduct}
(\mathcal{P}_1(X_1,X_2).\mathcal{P}_2)(X_3,X_4)X_5&=&\mathcal{P}_1(X_1,X_2)\mathcal{P}_2(X_3,X_4)X_5-\mathcal{P}_2(\mathcal{P}_1(X_1,X_2)X_3,X_4)X_5\\&&-\mathcal{P}_2(X_3,\mathcal{P}_1(X_1,X_2)X_4)X_5-\mathcal{P}_2(X_3,X_4)\mathcal{P}_1(X_1,X_2)X_5\notag 
\end{eqnarray}
and for $ (0,2 )-$type tensor $ \mathcal{\rho} $ we have 
\begin{equation}\label{generalT.S}
(\mathcal{P}_1(X_1,X_2).\mathcal{\rho})(X_3,X_4)=\mathcal{\rho}(\mathcal{P}_1(X_1,X_2)X_3,X_4)+\mathcal{\rho}(X_3,\mathcal{P}_1(X_1,X_2)X_4).
\end{equation}
\begin{theorem}
Let $M$ be a $(2n+1)$-dimensional $N(\kappa)$ contact manifold satisfying $ 
\mathcal{T}(\xi,X_{1}).R=0\label{T.Rsart} $for all vector fields $ X_1 $ on $ M $. 
Then $ S=\frac{A}{C}g+\frac{B}{C}\eta\otimes\eta $  if $ C=a_{1}+a_{5}\neq0 $, where $ A=-2n\kappa (a_{2}+a_{4}) $, $ B=2n\kappa (a_{1+}a_{2}+a_{4}+a_{5}) $.
Therefore $M~$is an $\eta-$Einstein manifold. 
Consequently, we have the classification in Table \ref{T.R=0}.
\end{theorem}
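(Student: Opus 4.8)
The plan is to unfold the hypothesis $\mathcal{T}(\xi,X_{1}).R=0$ through the derivation rule (\ref{generaltensorproduct}), substitute the $\xi$-evaluations of $\mathcal{T}$ obtained from (\ref{gT-tensor}), collapse every Riemannian curvature term onto the $N(\kappa)$-nullity relations, and then isolate the Ricci tensor by taking a $\xi$-component.

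First I would record the needed evaluations of $\mathcal{T}$ at $\xi$. Starting from (\ref{gT-tensor}) with first argument $\xi$ and using $R(\xi,X)Y=\kappa[g(X,Y)\xi-\eta(Y)X]$, $S(\xi,X)=2n\kappa\,\eta(X)$, $Q\xi=2n\kappa\,\xi$ and $r=2n(2n-2+\kappa)$, one obtains
\[
\mathcal{T}(\xi,X_{1})\xi=c\,X_{1}+a_{5}\,QX_{1}+d\,\eta(X_{1})\xi,\qquad \eta(\mathcal{T}(\xi,X_{1})\xi)=2n\kappa(a_{1}+\cdots+a_{6})\,\eta(X_{1}),
\]
with $c,d$ explicit functions of $\kappa$ and the $a_{i}$, and from (\ref{Txyzv}) likewise the scalar $g(\mathcal{T}(\xi,X_{1})X_{2},\xi)=\mathcal{T}(\xi,X_{1},X_{2},\xi)$. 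The key feature is that both $g(\mathcal{T}(\xi,X_{1})X_{2},\xi)$ and $g(X_{2},\mathcal{T}(\xi,X_{1})\xi)$ come out as combinations of $S(X_{1},X_{2})$, $g(X_{1},X_{2})$ and $\eta(X_{1})\eta(X_{2})$, with $S$-coefficient $a_{1}$ in the first and $a_{5}$ in the second.

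Next I would write
\[
0=\mathcal{T}(\xi,X_{1})R(X_{2},X_{3})X_{4}-R(\mathcal{T}(\xi,X_{1})X_{2},X_{3})X_{4}-R(X_{2},\mathcal{T}(\xi,X_{1})X_{3})X_{4}-R(X_{2},X_{3})\mathcal{T}(\xi,X_{1})X_{4},
\]
set $X_{3}=X_{4}=\xi$, and use $R(X_{2},\xi)\xi=\kappa[X_{2}-\eta(X_{2})\xi]$, $R(X_{2},Z)\xi=\kappa[\eta(Z)X_{2}-\eta(X_{2})Z]$ and $R(X_{2},\xi)Z=-\kappa[g(X_{2},Z)\xi-\eta(Z)X_{2}]$ (valid for every $Z$ because $\xi$ sits in a favourable slot). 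The two $\mathcal{T}(\xi,X_{1})X_{2}$-terms cancel and the $\eta(X_{2})\mathcal{T}(\xi,X_{1})\xi$-terms cancel, leaving
\[
\kappa[\eta(\mathcal{T}(\xi,X_{1})X_{2})+g(X_{2},\mathcal{T}(\xi,X_{1})\xi)]\,\xi=2\kappa\,\eta(\mathcal{T}(\xi,X_{1})\xi)\,X_{2}.
\]
Taking the $g(\cdot,\xi)$-component and plugging in the evaluations from the first step converts this into $(a_{1}+a_{5})S(X_{1},X_{2})=A\,g(X_{1},X_{2})+B\,\eta(X_{1})\eta(X_{2})$ with $A=-2n\kappa(a_{2}+a_{4})$ and $B=2n\kappa(a_{1}+a_{2}+a_{4}+a_{5})$; dividing by $C=a_{1}+a_{5}\neq0$ gives the claimed $\eta$-Einstein form. (Consistency check: $A+B=2n\kappa C$, so $S(\xi,\xi)=A/C+B/C=2n\kappa$, matching (\ref{RicX,xiandRicxi,xi}).) Table~\ref{T.R=0} then follows by substituting into $A/C$ and $B/C$ the coefficient values of each named tensor from Table~\ref{Det.T} and simplifying; whenever $a_{2}+a_{4}=0$ the $g$-term vanishes and $M$ is Einstein.

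The step I expect to be the main obstacle is the bookkeeping around $\mathcal{T}(\xi,X_{1})\xi$: since it carries the term $a_{5}QX_{1}$, inserting it back through $R(X_{2},\xi)(\cdot)$ regenerates $S(X_{1},X_{2})$ via $g(QX_{1},X_{2})$, so one must track every occurrence of $Q$ and repeatedly use $g(QX_{1},\xi)=2n\kappa\,\eta(X_{1})$ so that no uncontrolled $Q^{2}$-term appears; the choice $X_{3}=X_{4}=\xi$ followed by the $\xi$-projection is precisely what keeps the resulting identity linear in $S$, and the hypothesis $a_{1}+a_{5}\neq0$ is exactly what legitimizes the final division.
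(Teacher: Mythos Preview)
Your argument is correct and follows essentially the same path as the paper: expand the derivation \eqref{generaltensorproduct}, specialise two of the slots to $\xi$, take the $\xi$-component, and solve for $S$; the only cosmetic difference is that the paper sets $X_{2}=X_{4}=\xi$ while you set $X_{3}=X_{4}=\xi$, and both choices collapse to the identical scalar identity
\[
2\kappa\,\eta(X)\,\eta(\mathcal{T}(\xi,X_{1})\xi)-\kappa\,\eta(\mathcal{T}(\xi,X_{1})X)-\kappa\,g(X,\mathcal{T}(\xi,X_{1})\xi)=0.
\]
One small slip in your closing commentary: the Einstein rows of Table~\ref{T.R=0} arise when $B=2n\kappa(a_{1}+a_{2}+a_{4}+a_{5})=0$, not when $a_{2}+a_{4}=0$; the latter kills the $g$-coefficient $A$ and gives $S=(B/C)\,\eta\otimes\eta$, which is the $\eta$-Einstein (not Einstein) case seen for $\mathcal{W}_{4}$, $\mathcal{W}_{6}$, $\mathcal{W}_{8}$.
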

\begin{table}
		\begin{center}
			\begin{tabular}
[c]{|l|l|l|}\hline
$N(\kappa)-$contact manifold & $\eta-$Einstein/ Einstein & $S=$\\\hline
$\mathcal{P}_{\star}(\xi,X_1).R=0$ &
Einstein & $2n\kappa g$\\\hline
$\mathcal{W}_{1}(\xi,X_1).R=0$ & Einstein &
$2n\kappa g$\\\hline
$\mathcal{W}_{1}^{_{\star}}(\xi,X_1).R=0$ &
Einstein & $2n\kappa g$\\\hline
$\mathcal{W}_{2}(\xi,X_1).R=0$ & Einstein &
$2n\kappa g$\\\hline
$\mathcal{W}_{4}(\xi,X_1).R=0$ & $\eta
-$Einstein & $2n\kappa \eta\otimes\eta$\\\hline
$\mathcal{W}_{5}(\xi,X_1).R=0$ & Einstein &
$2n\kappa g$\\\hline
$\mathcal{W}_{6}(\xi,X_1).R=0$ & $\eta
-$Einstein & $2n\kappa \eta\otimes\eta$\\\hline
$\mathcal{W}_{7}(\xi,X_1).R=0$ & Einstein &
$2n\kappa g$\\\hline
$\mathcal{W}_{8}(\xi,X_1).R=0$ & $\eta
-$Einstein & $2n\kappa \eta\otimes\eta$\\\hline
\end{tabular}
\caption{\label{T.R=0} Classification of \NKCMM\ satisfying $ \mathcal{T}(\xi,X_{1}).R=0 $ with special values of $ a_i $}
\end{center}
\end{table}
\begin{proof}
	From (\ref{generaltensorproduct}) we get 
\begin{align}
\mathcal{T}(\xi,X_{1}).R  & =\mathcal{T}(\xi,X_{1}).R(X_{2},X_{3}%
)X_{4}-R(\mathcal{T}(\xi,X_{1})X_{2},X_{3})X_{4}\label{T.R}\\
& -R(X_{2},\mathcal{T}(\xi,X_{1})X_{3})X_{4}-R(X_{1},X_{2})\mathcal{T}%
(\xi,X_{1})X_{4}\nonumber.
\end{align}
By taking the inner product
with $\xi$ and then putting $ X_2=X_4=\xi $ in (\ref{T.R}) from (\ref{T.Rsart}), we have%
\[
2\kappa \eta(X_{3})\eta(\mathcal{T}(\xi,X_{1}))\xi-\kappa \eta(\mathcal{T}(\xi,X_{1}%
)X_{3})-\kappa g(X_{3},\mathcal{T}(\xi,X_{1}))\xi)=0.
\]
Thus, we obtain
\begin{equation*}
\left[  a_{1}+a_{5}\right]  S(X_{1},X_{3})=2n\kappa \left[  a_{2}+a_{4}\right]
g(X_{2},X_{3})+2n\kappa \left[  a_{1}+a_{2}+a_{4}+a_{5}\right]  \eta(X_{2}%
)\eta(X_{3})
\end{equation*}
which gives the proof. 
\end{proof}

\begin{theorem}
 Let $M$ be a $(2n+1)$-dimensional $N(k)$ contact manifold satisfying $ \mathcal{T}(\xi,X_{1}).S=0 $ and $ C_5=-a_{1}-a_{5}\neq 0 $. If $ \xi $ is Killing vector field then we have 
\begin{align*}
S(X_{1},X_{2}) & =\frac{A_5}{C_5}g(X_{1},X_{2})+\frac{B_5}{C_5}\eta(X_{1})\eta(X_{2})
\end{align*}
where%
\[
A_{5}=(2n\kappa-2n+2)(a_{0}\kappa+a_{7}r)+4n\kappa(n-1)a_{2}+4n^{2}\kappa^{2}a_{4}%
\]
and%
\[
B_{5}=(2n-2n\kappa-2)(a_{0}\kappa+a_{7}r)+4n^{2}\kappa^{2}(a_{1}+2a_{2}+2a_{3}+a_{4}%
+2a_{5}+2a_{6})-4n\kappa(n-1)(a_{2}+a_{5}). 
\]
Consequently, we have the classification in Table \ref{T.S=0}.
\begin{table}\small
\begin{tabular}
	[c]{|l|l|l|}\hline
	$N(k)-$contact manifold & $ \begin{aligned}
	\eta-\text{Einstein}\\\text{Einstein}
	\end{aligned} $& $S=$\\\hline
	$\mathcal{P}_{\star}(\xi,X_{1}).S=0$ & $\eta-$Einstein & \scriptsize $\!\begin{aligned}[t] (\frac{2 ((\kappa-1) n+1) (\kappa (a_{0}-1)+2 (n-1) (2 n+a_{0}))}{2 a_{1} n+a_{1}}+4 \kappa (n-1) n)g \\ +(\frac{2 ((\kappa-1) n+1) (2 \kappa a_{1} n (2 n+1)-\kappa a_{0}+\kappa-2 (n-1) (2 n+a_{0}))}{2 a_{1} n+a_{1}})\eta\otimes\eta \end{aligned} $\\\hline
	$\mathcal{P}(\xi,X_{1}).S=0$ & Einstein & $(4n^{2}\kappa^{2})g$\\\hline
	$\mathcal{W}_{1}(\xi,X_{1}).S=0$ & $\eta-$Einstein & $(4n\kappa(-n\kappa
	+2n-2))g+(8n\kappa(-n+1+n\kappa))\eta\otimes\eta$\\\hline
	$\mathcal{W}_{1}^{_{\star}}(\xi,X_{1}).S=0$ & Einstein & $(4n^{2}\kappa^{2})g$\\\hline
	$\mathcal{W}_{2}(\xi,X_{1}).S=0$ & $\eta-$Einstein & $4\kappa n(n-1)g$\\\hline
	$\mathcal{W}_{4}(\xi,X_{1}).S=0$ & $\eta-$Einstein & $4\kappa n(n-n\kappa
	-1)g+(4n^{2}\kappa^{2})\eta\otimes\eta$\\\hline
	$\mathcal{W}_{5}(\xi,X_{1}).S=0$ & $\eta-$Einstein & $-4\kappa n(2+n(\kappa-2))
	 g-4\kappa n(n-n\kappa-1)\eta\otimes\eta$\\\hline
	$\mathcal{W}_{6}(\xi,X_{1}).S=0$ & $\eta-$Einstein & $-4\kappa n(n-n\kappa-1)$
	$g+4\kappa n(n-1)\eta\otimes\eta$\\\hline
	$\mathcal{W}_{7}(\xi,X_{1}).S=0$ & $\eta-$Einstein & $4\kappa n(1+n(2\kappa
	-1))g+4\kappa n(n-n\kappa-1)\eta\otimes\eta$\\\hline
	$\mathcal{W}_{8}(\xi,X_{1}).S=0$ & $\eta-$Einstein & $4\kappa n(1+n(\kappa-1)g +4\kappa n(n-1)\eta\otimes\eta$\\\hline
\end{tabular}
\caption{\label{T.S=0} Classification of \NKCMM\ satisfying $ \mathcal{T}(\xi,X_{1}).S=0 $ with special values of $ a_i $}
\end{table}
\end{theorem}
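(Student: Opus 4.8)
The plan is to push the hypothesis through the derivation rule (\ref{generalT.S}): from $\mathcal{T}(\xi,X_{1}).S=0$ we get, for all $X_{1},X_{2},X_{3}\in\Gamma(TM)$,
\[
S(\mathcal{T}(\xi,X_{1})X_{2},X_{3})+S(X_{2},\mathcal{T}(\xi,X_{1})X_{3})=0 .
\]
So the first step is a closed form for $\mathcal{T}(\xi,X_{1})Y$. Starting from (\ref{gT-tensor}), using the curvature relations of a \NKCMM\ and $S(\xi,Y)=2n\kappa\eta(Y)$ (hence $Q\xi=2n\kappa\xi$) from (\ref{RicX,xiandRicxi,xi}), the term $a_{0}R(\xi,X_{1})Y$ collapses to $a_{0}\kappa\big(g(X_{1},Y)\xi-\eta(Y)X_{1}\big)$ and one obtains
\[
\mathcal{T}(\xi,X_{1})Y=\big(\alpha\, g(X_{1},Y)+a_{1}S(X_{1},Y)\big)\xi+\beta\,\eta(Y)X_{1}+2n\kappa a_{3}\,\eta(X_{1})Y+a_{5}\eta(Y)QX_{1}+a_{6}\eta(X_{1})QY,
\]
with $\alpha=a_{0}\kappa+2n\kappa a_{4}+a_{7}r$ and $\beta=2n\kappa a_{2}-a_{0}\kappa-a_{7}r$; for $Y=\xi$ this reduces to a combination of $X_{1}$, $QX_{1}$ and $\eta(X_{1})\xi$.

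Next I would insert this into the displayed identity and specialise one free slot to $\xi$ (say $X_{3}=\xi$), using $S(Y,\xi)=2n\kappa\eta(Y)$ at each occurrence. After this the relation involves only $S(X_{1},X_{2})$, the second-order Ricci term $S(QX_{1},X_{2})$, $g(X_{1},X_{2})$ and $\eta(X_{1})\eta(X_{2})$. This is exactly where the Killing hypothesis enters: since $\xi$ is Killing, $M$ is $K$-contact, so $h=0$ and the Ricci tensor (\ref{RicciX,Y}) reduces to $S(X,Y)=2(n-1)g(X,Y)+(2n\kappa-2(n-1))\eta(X)\eta(Y)$; with this, every term of the form $S(QX,Y)$ is rewritten through $g$ and $\eta\otimes\eta$, so that $S(X_{1},X_{2})$ is the only Ricci term that survives. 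Collecting coefficients and substituting $r=2n(2n-2+\kappa)$ where needed, the identity takes the form
\[
C_{5}\,S(X_{1},X_{2})=A_{5}\,g(X_{1},X_{2})+B_{5}\,\eta(X_{1})\eta(X_{2}),\qquad C_{5}=-a_{1}-a_{5},
\]
with $A_{5},B_{5}$ as in the statement. Dividing by $C_{5}\neq 0$ gives the claimed $\eta$-Einstein formula, so $M$ is an $\eta$-Einstein manifold.

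For Table \ref{T.S=0} it then suffices to substitute, curvature tensor by curvature tensor, the coefficient vectors $(a_{0},\dots,a_{7})$ of Table \ref{Det.T} into $A_{5}/C_{5}$ and $B_{5}/C_{5}$ and to simplify; whenever the $\eta\otimes\eta$-coefficient vanishes, $M$ is Einstein, otherwise properly $\eta$-Einstein. The curvature tensors with $a_{1}+a_{5}=0$ (for instance the conformal, conharmonic, concircular, quasi-conformal, $M$-projective and $W_{0},W_{0}^{\star},W_{3},W_{9}$ tensors) fall outside the hypothesis $C_{5}\neq 0$ and hence do not occur in the table.

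I expect the only real obstacle to be the bookkeeping: carrying all eight parameters through the contraction, and, above all, making the quadratic-in-$Q$ contributions $S(QX_{1},X_{2})$ --- the terms that genuinely require the Killing hypothesis --- collapse into the stated $A_{5},B_{5}$ instead of leaving a residual $\mathrm{tr}(Q^{2})$-type term. Once the linear relation $C_{5}S=A_{5}g+B_{5}\eta\otimes\eta$ is in hand, the rest is routine.
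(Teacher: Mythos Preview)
Your approach is essentially the paper's own: start from the derivation rule (\ref{generalT.S}), specialise $X_{3}=\xi$, and invoke the Killing hypothesis $h=0$ to collapse the resulting identity into the linear relation $C_{5}S=A_{5}g+B_{5}\,\eta\otimes\eta$. The only cosmetic difference is that the paper applies the Ricci formula (\ref{RicciX,Y}) --- with its explicit $h$-terms --- directly to the outer $S$ in $S(X_{2},\mathcal{T}(\xi,X_{1})\xi)$, obtaining terms in $g(X_{1},hX_{2})$ and $S(X_{1},hX_{2})$ that vanish when $h=0$, whereas you carry the second-order piece as $S(QX_{1},X_{2})$ and simplify it via the explicit form of $Q$ once $h=0$; the two are equivalent rewritings of the same computation.
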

\begin{proof}
	From (\ref{generalT.S}), we get
	\begin{equation}
	S(\mathcal{T}(\xi,X_{1})X_{2},X_{3})+S(X_{2},\mathcal{T}(\xi,X_{1}%
	)X_{3})=0.\label{T.S}%
	\end{equation}
	Putting $X_{3}=\xi$ in (\ref{T.S}), we have%
	\begin{align}
	0  & =2n\kappa [ (a_{0}\kappa+a_{7}r)(g(X_{1},X_{2})-\eta(X_{2})\eta(X_{1}))+a_{1}S(X_{1},X_{2})+\nonumber\\ & 2n\kappa(a_{2}+a_{3}+a_{5}+a_{6})\eta
	(X_{2})\eta(X_{1})+a_{4}2n\kappa g(X_{1},X_{2}) ]  +\label{T.S.1}\\
	& 2(n-1)[  (a_{0}\kappa+a_{7}r)(\eta(X_{2})\eta(X_{1})-g(X_{1}%
	,X_{2}))+a_{2}2n\kappa g(X_{1},X_{2})+ \nonumber\\ & a_{5}S(X_{1},X_{2})+2n\kappa(a_{1}+a_{3}+a_{4}+a_{6})\eta(X_{2})\eta(X_{1})]  +\nonumber\\
	& 2(n-1)\left[  -(a_{0}\kappa+a_{7}r)g(X_{1},hX_{2})+a_{2}2n\kappa
	g(X_{1},hX_{2})+a_{5}S(X_{1},hX_{2})\right]  +\nonumber\\
	& 2(n\kappa-(n-1))\eta(X_{2})\left[  2n\kappa\eta(X_{1})(a_{1}+a_{2}%
	+a_{3}+a_{4}+a_{5}+a_{6}) \right]  \nonumber
	\end{align}
	If $h=0$ in (\ref{T.S.1}),i.e $ \xi $ is Killing vector field, The proof is completed. 
\end{proof}

\end{document}